\documentclass[12pt,a4paper]{amsart}
\usepackage{
    amsmath,  amsfonts, amssymb,  amsthm,   amscd,
    gensymb,  graphicx, comment,  etoolbox, url,
    booktabs, stackrel, mathtools,enumitem, mathdots    
}
\usepackage[usenames,dvipsnames]{xcolor}
\usepackage[utf8]{inputenc}
\usepackage{lmodern, microtype, fullpage, wrapfig,textcomp,mathrsfs}
\usepackage[colorlinks=true, linkcolor=blue, citecolor=blue, urlcolor=blue, breaklinks=true]{hyperref}
\usepackage[capitalise]{cleveref}
\setlength{\marginparwidth}{2cm}
\usepackage{todonotes}
\newtheorem{theorem}{Theorem}[section]

\newtheorem{corollary}[theorem] {Corollary}
\newtheorem{definition}[theorem]{Definition}
\newtheorem{example}[theorem]{Example}
\newtheorem{lemma}[theorem]{Lemma}

\newtheorem{proposition}[theorem]{Proposition}
\newtheorem{remark}[theorem]{Remark}
\usepackage{tikz}
\usepackage{float}
\usepackage{tabularx}
\usepackage{kantlipsum}


\newcommand\R{\mathbb{R}}

\newcommand{\A}{\mathcal{A}}
\newcommand{\ipa}{\mathrm{L}}
\newcommand{\seqnum}[1]{\href{https://oeis.org/#1}{\rm \underline{#1}}}



\begin{document}

\title{Counting regions of the boxed threshold arrangement}
\author{Priyavrat Deshpande}
\address{Chennai Mathematical Institute}
\email{pdeshpande@cmi.ac.in}
\author{Krishna Menon}
\address{Chennai Mathematical Institute}
\email{krishnamenon@cmi.ac.in}
\author{Anurag Singh}
\address{Chennai Mathematical Institute}
\email{anuragsingh@cmi.ac.in}
\thanks{PD and AS are partially supported by a grant from the Infosys Foundation}

\begin{abstract}
In this paper we consider the hyperplane arrangement in $\R^n$ whose hyperplanes are $\{x_i + x_j = 1\mid 1\leq i < j\leq n\}\cup \{x_i=0,1\mid 1\leq i\leq n\}$. 
We call it the \emph{boxed threshold arrangement} since we show that the bounded regions of this arrangement are contained in an $n$-cube and are in one-to-one correspondence with 
the labeled threshold graphs on $n$ vertices. 
The problem of counting regions of this arrangement was studied earlier by Joungmin Song. 
He determined the characteristic polynomial of this arrangement by relating its coefficients to the count of certain graphs. 
Here, we provide bijective arguments to determine the number of regions. 
In particular, we construct certain signed partitions of the set $\{-n,\dots, n\}\setminus\{0\}$ and also construct colored threshold graphs on $n$ vertices and show that both these objects are in bijection with the regions of the boxed threshold arrangement. 
We independently count these objects and provide a closed form formula for the number of regions. 
\end{abstract}

\keywords{threshold graph, hyperplane arrangement, finite field method.}
\subjclass[2010]{52C35, 32S22, 05C30, 11B68}
\maketitle

\section{Introduction}
A \emph{hyperplane arrangement} $\A$ is a finite collection of affine hyperplanes (i.e., codimension $1$ subspaces and their translates) in $\R^n$. 
Without loss of generality we assume that arrangements we consider are \emph{essential}, i.e., the subspace spanned by the normal vectors is the ambient vector space.   
Removing all the hyperplanes of $\A$ leaves $\R^n$ disconnected; counting the number of connected components using diverse combinatorial methods is an active area of research. 
A \emph{flat} of $\A$ is a nonempty intersection of some of the hyperplanes in $\A$; the ambient vector space is a flat since it is an intersection of no hyperplanes. 
Flats are naturally ordered by reverse set inclusion; the resulting poset is called the \emph{intersection poset} and is denoted by $\ipa(\A)$. A \emph{region} of $\A$ is a connected component of $\R^n\setminus \bigcup \A$. 
The \emph{characteristic polynomial} of $\A$ is defined as 
\[\chi_{\A}(t) := \sum_{x\in\ipa(\A)} \mu(\hat{0},x)\, t^{\dim(x)}\]
where $\mu$ is the M\"obius function of the intersection poset and $\hat{0}$ corresponds to the flat $\R^n$. 
The characteristic polynomial is a fundamental combinatorial and topological invariant of the arrangement and plays a significant role throughout the theory of hyperplane arrangements. Among other things, the polynomial encodes enumerative information about the stratification of the space $\R^n$, induced by the arrangement.
We refer the reader to Stanley's notes \cite{stanarr07} for more on the enumerative aspects of hyperplane arrangements. 
In particular we have the following seminal result by Zaslavsky.

\begin{theorem}[\cite{zas75}]
Let $\A$ be an arrangement in $\R^n$. Then the number of regions of $\A$ is given by 
\[ r(\A) = (-1)^n \chi_{\A}(-1)\]
and the number of bounded regions is given by 
\[b(\A) = (-1)^n \chi_{\A}(1). \]
\end{theorem}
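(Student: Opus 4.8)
The plan is to establish both identities simultaneously by induction on the number of hyperplanes $|\A|$, using deletion--restriction (this is Zaslavsky's theorem, and the sketch below is the classical argument). For a chosen $H_0\in\A$, let $\A'=\A\setminus\{H_0\}$ be the \emph{deletion} and let $\A''$ be the \emph{restriction} of $\A$ to $H_0$, namely the arrangement $\{H\cap H_0 : H\in\A',\ H\cap H_0\neq\emptyset\}$ inside $H_0\cong\R^{n-1}$. Two recursions drive everything. The first is $\chi_\A(t)=\chi_{\A'}(t)-\chi_{\A''}(t)$, obtained by comparing intersection posets: every flat of $\A'$ is a flat of $\A$, the remaining flats of $\A$ are exactly those of the form $X\cap H_0$ with $X\in\ipa(\A')$ and $X\cap H_0\neq\emptyset$, these are indexed by $\ipa(\A'')$, and a short M\"obius-function computation yields the relation. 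The second is the purely geometric identity $r(\A)=r(\A')+r(\A'')$: when $H_0$ is put back, every region of $\A'$ that misses $H_0$ survives and every region of $\A'$ that meets $H_0$ is split into two, while the regions of $\A'$ meeting $H_0$ are in bijection with the regions of $\A''$ (a region of $\A''$ is the trace on $H_0$ of a unique region of $\A'$).

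For the region count I would induct on $|\A|$ over \emph{all} arrangements. The base case $\A=\emptyset$ gives $\chi_\A(t)=t^n$ and $r(\A)=1=(-1)^n\chi_\A(-1)$; the inductive step is immediate since the two recursions give $(-1)^n\chi_\A(-1)=(-1)^n\chi_{\A'}(-1)+(-1)^{n-1}\chi_{\A''}(-1)=r(\A')+r(\A'')=r(\A)$. No essentiality hypothesis is needed for this half.

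The bounded-region formula is the delicate part, and I expect the main obstacle to be that the naive analogue $b(\A)=b(\A')+b(\A'')$ is simply \emph{false} in general: slicing an \emph{unbounded} region of $\A'$ by $H_0$ can create a bounded piece (cut an open quadrant of $\R^2$ by a suitably sloped line). The remedy is to use essentiality together with a careful choice of $H_0$. If $|\A|>n$ one can pick $H_0$ so that $\A'$ is still essential (choose $H_0$ outside a fixed $n$-element subfamily of $\A$ whose normals span $\R^n$), and a short linear-algebra check then shows that the restriction $\A''$ is automatically essential in $H_0$. The key geometric consequence of essentiality is that \emph{every region of an essential arrangement has a pointed recession cone}: a line contained in the recession cone of a region would be orthogonal to every normal, forcing it to be zero.

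Granting this, for a region $R'$ of $\A'$ meeting $H_0$, with trace $F=R'\cap H_0$ and the two open pieces $R'\cap H_0^{+}$ and $R'\cap H_0^{-}$, one checks: if $R'$ is bounded then both pieces are bounded; and if $R'$ is unbounded then at least one piece is bounded precisely when $F$ is bounded as a region of $\A''$ (the forward implication is immediate from $F\subseteq\overline{R'\cap H_0^{+}}$; the converse is where pointedness enters, via a conic-combination argument showing that if both pieces are unbounded then the recession cone of $R'$ already meets $H_0$ nontrivially). Matching these cases against the bijection between regions of $\A'$ meeting $H_0$ and regions of $\A''$, a short count gives $b(\A)=b(\A')+b(\A'')$ in the essential setting. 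The induction then closes: the base case is $|\A|=n$, an essential arrangement of $n$ hyperplanes with independent normals, for which $\chi_\A(t)=(t-1)^n$ and $b(\A)=0=(-1)^n\chi_\A(1)$; and the inductive step combines this bounded recursion with $\chi_\A=\chi_{\A'}-\chi_{\A''}$ to give $b(\A)=(-1)^n\chi_{\A'}(1)+(-1)^{n-1}\chi_{\A''}(1)=(-1)^n\chi_\A(1)$.
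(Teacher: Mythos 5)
This is a point where you cannot be compared against the paper directly: the statement is Zaslavsky's theorem, which the paper only cites from \cite{zas75} and never proves, so any correct proof is ``a different route'' by default. Your argument is the classical deletion--restriction proof (as in Lecture 2 of Stanley's notes), and its outline is sound: the two recursions $\chi_{\A}=\chi_{\A'}-\chi_{\A''}$ and $r(\A)=r(\A')+r(\A'')$ give the region count for arbitrary arrangements by induction from the empty arrangement, and you correctly isolate the genuinely delicate half, namely that $b(\A)=b(\A')+b(\A'')$ fails in general and must be salvaged by essentiality. Your choice of $H_0$ keeping $\A'$ essential, the observation that $\A''$ is then automatically essential (projections of a spanning set of normals onto $\vec{H_0}$ span it, and normals of hyperplanes parallel to $H_0$ project to zero anyway), the pointed-recession-cone lemma, and the conic-combination argument that an unbounded region with bounded trace has exactly one bounded piece all check out, and the resulting case count does reduce to $b(\A)=b(\A')+b(\A'')$. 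The one place you gloss over something real is the derivation of $\chi_{\A}=\chi_{\A'}-\chi_{\A''}$: the flats of $\A$ are \emph{not} partitioned as $\ipa(\A')\sqcup\ipa(\A'')$, since a flat of $\A''$ may already be a flat of $\A'$, so ``a short M\"obius-function computation'' on the posets needs more care than your phrasing suggests; the clean route is Whitney's expansion $\chi_{\A}(t)=\sum_{\mathcal{B}}(-1)^{|\mathcal{B}|}t^{n-\operatorname{rank}(\mathcal{B})}$ over central subarrangements $\mathcal{B}\subseteq\A$, split according to whether $H_0\in\mathcal{B}$. With that lemma supplied (it is standard), your proof is complete; note also that the bounded-region formula as stated in the paper implicitly relies on the standing essentiality convention announced in its introduction, exactly as your argument requires.
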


The finite field method,  developed by Athanasiadis \cite{athanas96}, converts the computation of the characteristic polynomial to a point counting problem. 
A combination of these two results allowed for the computation of the number of regions of several arrangements of interest. 

Another way to count the number of regions is to give a bijective proof. 
This approach involves finding a combinatorially defined set whose elements are in bijection with the regions of the given arrangement and are easier to count. 
For example, the regions of the braid arrangement (whose hyperplanes are given by the equations $x_i-x_j = 0$ for $1\leq i<j\leq n$) correspond to the $n!$ permutations of order $n$. 
The regions of the Shi arrangement (whose hyperplanes are given by the equations $x_i-x_j = 0, 1$ for $1\leq i< j\leq n$) are in bijection with the parking functions on $[n]$, hence the number of regions is $(n+1)^{n-1}$. 
Refer to Stanley's notes \cite[Lecture 5]{stanarr07} for details. 

In the present article we consider the following hyperplane arrangement 
\[\mathcal{BT}_n := \{X_i + X_j =1 \mid 1\leq i< j\leq n\}\cup \{X_i = 0,1\mid 1\leq i\leq n \}. \]
The problem of counting regions of this arrangement was recently solved by Song in a series of papers. 
His first approach \cite{songcolor17, songenum17} involved relating the coefficients of the characteristic polynomial to the generating functions for the number of certain graphs. 
These generating functions \cite[Theorem 1]{songenum17} have quite a complicated expression and consequently make it difficult to determine a compact form for the characteristic polynomial. 
Later Song \cite{songchar18} used the finite field method to get a slightly simpler expression for the characteristic polynomial. 

The main aim of this paper is to provide bijective proofs for the number of regions of this arrangement. 
In particular, in \cref{section3} we construct certain (signed) ordered partitions of the set $\{-n, -(n-1),\dots,n-1, n\}\setminus \{ 0\}$ and show that they are in bijection with the regions of $\mathcal{BT}_n$. 
We also show how to count these partitions. 
In \cref{section4} we spell out a recipe to color the vertices of a labeled threshold graph on $n$ vertices such that the number of such colored threshold graphs is equal to the regions of $\mathcal{BT}_n$. 
However, we begin the article by establishing a simpler form for the characteristic polynomial in \cref{sectio2}. 
There we also justify the term ``\emph{boxed threshold}".

\section{The characteristic polynomial}\label{sectio2}
We first translate the hyperplanes in $\mathcal{BT}_n$ in order to obtain a combinatorially isomorphic arrangement with the same characteristic polynomial.
Putting $X_i=x_i+\frac{1}{2}$ for every $i$ we get
\begin{equation}
\{x_i + x_j = 0\mid 1\leq i < j\leq n\}\cup \{x_i=-\frac{1}{2},\frac{1}{2}\mid 1\leq i\leq n\}.\label{changed equation}
\end{equation}
We will stick to the notation $\mathcal{BT}_n$ to denote the above arrangement.
The notation $[k]$ is used to denote the set $\{1,\dots,k \}$.
We begin by stating a generalization of the finite field method, given by Athanasiadis \cite{athanasiadis1999extended}, in our context. 

\begin{theorem}[{\cite[Theorem 2.1]{athanasiadis1999extended}}]\label{ffm}
If $\mathcal{A}$ is a sub-arrangement of the type $C$ arrangement in $\R^n$, that is, a sub-arrangement of $\{x_i\pm x_j=0 \mid 1 \leq i < j \leq n\} \cup \{x_i = 0 \mid i \in [n]\}$, there exists an integer $k$ such that for all odd integers $q$ greater than $k$,
    \begin{equation*}
        \chi_\mathcal{A}(q)=\#(\mathbb{Z}_q^n \setminus V_{\mathcal{A}})
    \end{equation*}
    where $V_{\mathcal{A}}$ is the union of hyperplanes in $\mathbb{Z}_q^n$ obtained by reducing $\mathcal{A}$ mod q.
\end{theorem}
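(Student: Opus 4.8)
The plan is to obtain Theorem~\ref{ffm} by comparing two inclusion--exclusion computations --- one expressing $\chi_{\A}(t)$ combinatorially over $\Q$, the other counting $\#(\mathbb{Z}_q^n\setminus V_{\A})$ over $\mathbb{Z}_q$ --- and then showing that reduction mod $q$ identifies the two whenever $q$ is odd and large enough. First I would record Whitney's theorem,
\[
\chi_{\A}(t)=\sum_{x\in\ipa(\A)}\mu(\hat 0,x)\,t^{\dim x}
=\sum_{\substack{\mathcal{B}\subseteq\A\\ \bigcap\mathcal{B}\neq\emptyset}}(-1)^{|\mathcal{B}|}\,t^{\,n-\operatorname{rank}\mathcal{B}},
\]
where $\operatorname{rank}\mathcal{B}$ is the $\Q$-rank of the matrix of normals of the hyperplanes in $\mathcal{B}$. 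Dually, straightforward inclusion--exclusion on the hyperplanes of the reduced arrangement $\A_q$, together with the fact that a codimension-$r$ affine subspace of $\mathbb{Z}_q^n$ has exactly $q^{\,n-r}$ points (for $q$ a prime power, and by multiplicativity in general), gives
\[
\#(\mathbb{Z}_q^n\setminus V_{\A})=\sum_{\substack{\mathcal{B}\subseteq\A\\ \bigcap\mathcal{B}\neq\emptyset\ \text{over }\mathbb{Z}_q}}(-1)^{|\mathcal{B}|}\,q^{\,n-\operatorname{rank}_q\mathcal{B}}.
\]
Both identities are purely formal and hold for every $q$.

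The crux is to show that for odd $q$ exceeding a suitable bound $k$, reduction mod $q$ is a rank-preserving, containment-preserving bijection between the flats of $\A$ and those of $\A_q$; granting this, the two displayed sums agree at $t=q$, since $\mu(\hat 0,x)=\mu(\hat 0,\bar x)$ for corresponding flats (M\"obius functions of isomorphic posets coincide). Since every hyperplane of $\A$ has normal vector among $e_i$ and $e_i\pm e_j$, the matrix of normals of any $\mathcal{B}\subseteq\A$ is a submatrix of the type-$B_n/C_n$ root matrix, and it is a standard fact about signed-graphic incidence matrices that each of its minors is either $0$ or $\pm 2^m$ for some $m\ge 0$. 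Hence a nonzero maximal minor stays nonzero modulo any odd $q$, so $\operatorname{rank}_q\mathcal{B}=\operatorname{rank}\mathcal{B}$, and a subset dependent over $\Q$ stays dependent over $\mathbb{Z}_q$; for a central subarrangement of $C_n$ the two displays are then indexed by the same family of subsets (those whose real intersection, equivalently whose $\mathbb{Z}_q$-intersection, is the origin), and the bound $k$ enters only in the general affine form of the statement, to prevent distinct real flats from being identified or an empty real intersection from becoming nonempty mod $q$. Combining this with the two inclusion--exclusion identities yields $\chi_{\A}(q)=\#(\mathbb{Z}_q^n\setminus V_{\A})$.

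The one place where genuine work is required is the determinant claim together with the extraction of the explicit $k$: one must verify that submatrices built from rows $e_i$ and $e_i\pm e_j$ have determinants that are, up to sign, powers of $2$ (so that $2$ is the only ``bad'' prime, which is exactly what forces the hypothesis that $q$ be odd), and then, for the general affine version, bound how large $q$ must be for the mod-$q$ reduction to be faithful on the whole intersection poset. Everything else --- the two expansions and the passage from a poset isomorphism to equality of the evaluated characteristic polynomials --- is routine M\"obius-function bookkeeping.
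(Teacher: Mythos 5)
The paper does not actually prove this statement: it is quoted, with attribution, as Theorem 2.1 of Athanasiadis's paper on extended Linial arrangements, so there is no in-house proof to compare yours against. Judged on its own, your sketch reconstructs the standard argument for the finite field method correctly. Two remarks on the details. First, in the central case stated here every subset $\mathcal{B}\subseteq\mathcal{A}$ has nonempty intersection (it contains the origin, both over $\mathbb{R}$ and over $\mathbb{Z}_q$), so both Whitney sums automatically run over all subsets; your parenthetical ``those whose intersection is the origin'' should read ``is nonempty, which is automatic'', and once the ranks match the two sums agree term by term --- the M\"obius-function/poset-isomorphism detour is not needed. Likewise no lower bound $k$ is needed in the central case, as you observe. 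Second, the one load-bearing ingredient you defer is genuine but standard: every minor of a matrix whose rows lie in $\{e_i,\ e_i\pm e_j\}$ is $0$ or $\pm 2^m$ (a fact about signed-graph incidence matrices going back to Zaslavsky), which is exactly what makes $2$ the only bad prime; and to get the point count $q^{\,n-\operatorname{rank}}$ for composite odd $q$ you should invoke Smith normal form plus the Chinese remainder theorem (all elementary divisors are powers of $2$, hence units in $\mathbb{Z}_q$) rather than an unexplained ``multiplicativity''. With those two points tightened, your argument is a complete and correct proof of the statement as given --- though be aware that the paper later applies the method to arrangements containing $x_i=\pm\frac12$, for which one needs Athanasiadis's more general affine version, where the threshold $k$ really does enter.
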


We use this result to prove the following relationship between the characteristic polynomial of sub-arrangements of the type $C$ arrangement and that of their ``boxed" versions.

\begin{proposition}
Let $\mathcal{A}$ be an arrangement in $\mathbb{R}^n$ that is a sub-arrangement of the type $C$ arrangement and  let $\mathcal{BA}=\mathcal{A} \cup \{x_i = -\frac{1}{2}, \frac{1}{2} \mid i \in [n]\}$. 
Then 
\begin{equation*}
    \chi_{\mathcal{BA}}(t)=\chi_{\mathcal{A}}(t-2).
\end{equation*}
\end{proposition}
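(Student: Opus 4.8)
The plan is to read off both sides via the finite field method and then identify the resulting point counts through an explicit coordinate‑wise bijection between residue systems of two different sizes. Fix a large odd integer $q$. By \cref{ffm} applied to $\mathcal A$, once $q-2$ exceeds the relevant threshold we have $\chi_{\mathcal A}(q-2)=\#\big(\mathbb Z_{q-2}^{\,n}\setminus V_{\mathcal A}\big)$, so $q-2$ is the modulus to target. The arrangement $\mathcal{BA}$ is not literally a sub‑arrangement of the type $C$ arrangement, but after clearing denominators in $x_i=\pm\tfrac12$ it is defined over $\mathbb Z$, so the finite field method still applies and gives $\chi_{\mathcal{BA}}(q)=\#\big(\mathbb Z_q^{\,n}\setminus V_{\mathcal{BA}}\big)$ for all sufficiently large odd $q$. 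It therefore suffices to construct, for each such $q$, a bijection
\[
\mathbb Z_q^{\,n}\setminus V_{\mathcal{BA}}\ \longrightarrow\ \mathbb Z_{q-2}^{\,n}\setminus V_{\mathcal A};
\]
then $\chi_{\mathcal{BA}}(t)$ and $\chi_{\mathcal A}(t-2)$ are polynomials agreeing at infinitely many integers, hence identical.

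I would build this bijection one coordinate at a time. A point of $\mathbb Z_q^{\,n}$ avoids the hyperplanes $x_i=\pm\tfrac12$ (for $i\in[n]$) precisely when each coordinate lies in $\mathbb Z_q\setminus\{\tfrac12,-\tfrac12\}$. Since $q$ is odd, $\tfrac12=\tfrac{q+1}{2}$ and $-\tfrac12=\tfrac{q-1}{2}$, which under the balanced representatives $\{-\tfrac{q-1}{2},\dots,\tfrac{q-1}{2}\}$ of $\mathbb Z_q$ are $\mp\tfrac{q-1}{2}$; hence $\mathbb Z_q\setminus\{\pm\tfrac12\}$ is represented by the symmetric block $\{k\in\mathbb Z : |k|\le\tfrac{q-3}{2}\}$, which is a complete residue system modulo $q-2$. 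Let $\phi\colon\mathbb Z_q\setminus\{\pm\tfrac12\}\to\mathbb Z_{q-2}$ be reduction modulo $q-2$ on these representatives; it is a bijection, it sends $0$ to $0$ and nothing else to $0$, and it is odd, $\phi(-x)=-\phi(x)$. Set $\Phi=\phi^{\times n}$, a bijection from the points of $\mathbb Z_q^{\,n}$ avoiding all $x_i=\pm\tfrac12$ onto $\mathbb Z_{q-2}^{\,n}$.

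It remains to verify that $\Phi$ neither creates nor destroys incidences with the hyperplanes of $\mathcal A$, each of which has the form $x_i=0$, $x_i-x_j=0$, or $x_i+x_j=0$. The first follows from $\phi(0)=0$ and injectivity, the second from injectivity alone; for $x_i+x_j=0$, one direction is the oddness of $\phi$, and the converse is the only step requiring a genuine argument: if $k,\ell$ are balanced representatives with $k+\ell\equiv0\pmod{q-2}$, then $|k+\ell|\le q-3<q-2$ forces $k+\ell=0$, so $p_i+p_j=0$ already holds in $\mathbb Z_q$. Hence $\Phi$ restricts to the desired bijection $\mathbb Z_q^{\,n}\setminus V_{\mathcal{BA}}\to\mathbb Z_{q-2}^{\,n}\setminus V_{\mathcal A}$, and the proposition follows. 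I expect the two bookkeeping points above to be the real content: justifying that the finite field method is legitimately available for $\mathcal{BA}$ despite the half‑integer hyperplanes, and confirming that the affine hyperplanes $x_i+x_j=0$ are preserved under the change of modulus.
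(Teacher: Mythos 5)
Your proof is correct and follows essentially the same route as the paper's: both sides are computed by the finite field method and the two point counts are identified via the coordinate-wise bijection between $\mathbb{Z}_{q-2}$ and the balanced representatives of $\mathbb{Z}_q\setminus\{\pm\frac{1}{2}\}=\mathbb{Z}_q\setminus\{\pm\frac{q-1}{2}\}$, checked to preserve the relations $a=0$, $a-b=0$ and $a+b=0$. The only differences are cosmetic: you define the map from $\mathbb{Z}_q^n\setminus V_{\mathcal{BA}}$ onto $\mathbb{Z}_{q-2}^n\setminus V_{\mathcal{A}}$ where the paper writes down its inverse, and you are more explicit than the paper about why the finite field count is legitimately available for $\mathcal{BA}$ even though it is not itself a sub-arrangement of the type $C$ arrangement.
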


\begin{proof}
Let $q$ be any large odd number.
Set $D_q^n := \{(a_1, \dots, a_n) \in \mathbb{Z}_q^n \mid a_i \neq \pm \frac{q-1}{2}\}$.
Define a bijection $f : \mathbb{Z}_{q-2} \rightarrow  \mathbb{Z}_q \setminus \{\frac{q-1}{2},-\frac{q-1}{2}\}$ as
\begin{equation*}
    f(i) = i \quad \text{for }i \in [-\frac{q-3}{2},\frac{q-3}{2}].
\end{equation*}
It is clear that for any $a,b \in \mathbb{Z}_{q-2}$
\begin{enumerate}
    \item $a+b = 0 \Leftrightarrow f(a)+f(b) = 0$,
    \item $a-b = 0 \Leftrightarrow f(a)-f(b) = 0$, and
    \item $a = 0 \Leftrightarrow f(a) = 0$.
\end{enumerate}
Using $f$, we can define a bijection $F : \mathbb{Z}_{q-2}^n \rightarrow D_q^n$ as
\begin{equation*}
    F(a_1,\dots,a_n) = (f(a_1),\dots,f(a_n))\quad \text{for }(a_1,\dots,a_n) \in \mathbb{Z}_{q-2}^n.
\end{equation*}
By the properties of $f$, we can see that $F$ induces a bijection between those tuples in $\mathbb{Z}_{q-2}^n$ that do not satisfy the defining equation of any hyperplane in $\mathcal{A}$ and those tuples in $\mathbb{Z}_q^n$ that do not satisfy the defining equation of any hyperplane in $\mathcal{BA}$.
So, we get that for large odd numbers $q$,
\begin{equation*}
    \chi_{\mathcal{BA}}(q)=\chi_{\mathcal{A}}(q-2).
\end{equation*}
Since $\chi_{\mathcal{BA}}$ and $\chi_{\mathcal{A}}$ are polynomials, we get the required result.
\end{proof}
Let $\mathcal{T}_n$ denote the threshold arrangement in $\R^n$, i.e., 
$\mathcal{T}_n := \{x_i + x_j = 0 \mid 1\leq i < j\leq n \}$. 
The reason this is called the threshold arrangement is that its regions are in bijection with labeled threshold graphs on $n$ vertices (see \cref{section4} for details).
This is clearly a sub-arrangement of the type $C$ arrangement. 

\begin{corollary}\label{boxth corollary} The characteristic polynomials of $\mathcal{BT}_n$ and $\mathcal{T}_n$ are related as follows: 
\[\chi_{\mathcal{BT}_n}(t)=\chi_{\mathcal{T}_n}(t-2). \]
\end{corollary}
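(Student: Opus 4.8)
The plan is to recognize this statement as an immediate specialization of the Proposition just proved, so no new machinery is needed. First I would recall that, after the change of variables $X_i = x_i + \tfrac12$ recorded in \eqref{changed equation}, the arrangement $\mathcal{BT}_n$ becomes
\[
\{x_i + x_j = 0 \mid 1 \le i < j \le n\} \cup \{x_i = -\tfrac12, \tfrac12 \mid i \in [n]\},
\]
and this translation is a combinatorial isomorphism, hence preserves the intersection poset and therefore the characteristic polynomial. So it suffices to compute $\chi$ of this translated arrangement, which I will continue to denote $\mathcal{BT}_n$.

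Next I would observe that the first set of hyperplanes above is exactly $\mathcal{T}_n$, and that $\mathcal{T}_n = \{x_i + x_j = 0 \mid 1 \le i < j \le n\}$ is a sub-arrangement of the type $C$ arrangement $\{x_i \pm x_j = 0\} \cup \{x_i = 0\}$: it uses only the ``$+$'' hyperplanes and none of the ``$-$'' hyperplanes or coordinate hyperplanes. Thus $\mathcal{T}_n$ satisfies the hypothesis of the Proposition, and the translated $\mathcal{BT}_n$ is precisely the arrangement called $\mathcal{BA}$ there with $\mathcal{A} = \mathcal{T}_n$.

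Applying the Proposition with $\mathcal{A} = \mathcal{T}_n$ then gives
\[
\chi_{\mathcal{BT}_n}(t) = \chi_{\mathcal{BA}}(t) = \chi_{\mathcal{T}_n}(t-2),
\]
which is the assertion. I do not expect any genuine obstacle: the only points requiring care are the bookkeeping of the translation (checking that the box hyperplanes obtained are exactly $x_i = \pm\tfrac12$, matching the definition of $\mathcal{BA}$) and the verification that $\mathcal{T}_n$ is indeed a sub-arrangement of the type $C$ arrangement so that the Proposition is applicable.
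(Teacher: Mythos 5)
Your proposal is correct and matches the paper's intended argument exactly: the corollary is stated immediately after the Proposition precisely so that it follows by taking $\mathcal{A}=\mathcal{T}_n$, noting that the translated $\mathcal{BT}_n$ is $\mathcal{BA}$ in that case and that $\mathcal{T}_n$ is a sub-arrangement of the type $C$ arrangement. No differences worth noting.
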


Consequently, the number of bounded regions of $\mathcal{BT}_n$ is equal to the number of regions of $\mathcal{T}_n$. 
Moreover, these bounded regions are contained in the cube (or a \emph{box}) $\left[-\frac{1}{2},\frac{1}{2} \right]^n$.
Next, we derive a closed form expression for $\chi_{\mathcal{T}_n}(t)$ using the finite field method. 

\begin{proposition}
The characteristic polynomial of the threshold arrangement $\mathcal{T}_n$ is
\begin{equation*}
    \chi_{\mathcal{T}_n}(t)=\sum_{k = 1}^n (S(n,k)+n \cdot S(n-1,k))\prod_{i=1}^k(t-(2i-1)).
\end{equation*}
Here $S(n,k)$ are the Stirling numbers of the second kind. 
\end{proposition}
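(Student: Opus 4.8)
The plan is to apply the finite field method (\cref{ffm}) directly to $\mathcal{T}_n = \{x_i+x_j=0 \mid 1\leq i<j\leq n\}$, which is clearly a sub-arrangement of the type $C$ arrangement. Fix a large odd prime $q$; then $\chi_{\mathcal{T}_n}(q)$ equals the number of tuples $(a_1,\dots,a_n)\in\mathbb{Z}_q^n$ such that $a_i+a_j\neq 0$ for all $i<j$. I would count these tuples by classifying them according to which values from $\mathbb{Z}_q$ appear among the coordinates and how those values pair up under negation $a\mapsto -a$. The key constraint $a_i+a_j\neq 0$ for $i<j$ says: no two \emph{distinct} coordinate positions can hold values that are negatives of each other; in particular at most one coordinate can be $0$, at most one coordinate can equal any given nonzero value $v$, and we cannot simultaneously use a value $v$ and its negative $-v$ at two different positions. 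Equivalently, the set of values used is an ``independent set'' in the graph on $\mathbb{Z}_q$ whose edges join $v$ to $-v$ (a perfect matching on the $q-1$ nonzero elements plus an isolated loop-like vertex $0$), and each used value is taken by exactly one coordinate — \emph{except} that the value $0$ may be repeated, since $0+0=0$ is of the form $a_i+a_j$ only when $i<j$, which it is, so actually $0$ also cannot be repeated. Wait — more carefully: $a_i=a_j=0$ with $i<j$ gives $a_i+a_j=0$, so $0$ cannot be repeated either; every used value is taken exactly once.

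So the count becomes: choose an ordered sequence of $n$ distinct values from $\mathbb{Z}_q$ no two of which are negatives of each other. Grouping by the (unordered) pattern, this is a matter of first choosing which value from each antipodal pair $\{v,-v\}$ to (possibly) use. There are $(q-1)/2$ such pairs plus the singleton $\{0\}$. I would set $q = 2m+1$ so there are $m$ antipodal pairs. The number of ways to pick an ordered tuple of $n$ distinct nonzero elements, no two antipodal, is a polynomial in $m$: it equals $\sum$ over subsets and choices, and after summing one should recognize the resulting expression. The cleanest route: let $g(n)$ be the number of such tuples \emph{using only nonzero values}; then allowing one coordinate to be $0$ contributes $n\cdot g(n-1)$ (choose which position is $0$, fill the rest). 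For the nonzero count, picking $k$ antipodal pairs to draw from, a sign for each ($2^k$ ways), and then an ordered arrangement of $n$ coordinates into these $k$ ``colors'' with each color used at least once gives $\binom{m}{k}2^k k! \,S(n,k)$; summing over $k$ and using the standard identity $\sum_k \binom{m}{k} 2^k k!\, S(n,k) = \sum_k S(n,k)\prod_{i=0}^{k-1}(2m - 2i) = \sum_k S(n,k)\prod_{i=1}^{k}(2m-2(i-1))$. Substituting $2m = q-1$, i.e. $2m = t-1$ after replacing $q$ by the indeterminate $t$, turns $\prod_{i=1}^k(2m-2(i-1))$ into $\prod_{i=1}^k(t-1-2(i-1)) = \prod_{i=1}^k(t-(2i-1))$. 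Combining the two pieces yields $\chi_{\mathcal{T}_n}(t) = \sum_{k=1}^n \big(S(n,k)+n\,S(n-1,k)\big)\prod_{i=1}^k(t-(2i-1))$, as claimed; since two polynomials agreeing at infinitely many odd integers are equal, we are done.

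The main obstacle I anticipate is the bookkeeping in the nonzero count: one must be careful that ``antipodal'' pairs are handled correctly (each used nonzero value determines a pair, and within a pair we never use both), that the map from tuples to (pair-subset, sign-choice, set-partition-into-ordered-blocks) data is a genuine bijection, and that the off-by-one in the exponent $\prod(t-(2i-1))$ versus $\prod(2m-2(i-1))$ comes out right after the substitution $q=2m+1$. A secondary point to check is that the ``$0$ may appear at most once'' analysis is correct and that the $n\cdot g(n-1)$ term is not double-counted against $g(n)$ — they are disjoint because tuples in $g(n)$ have no zero coordinate. Everything else is the routine Stirling identity $\sum_k \binom{m}{k} k!\, x^k S(n,k) \cdot(\text{with } x=2)$ expanded into a falling-factorial-type product, which is standard.
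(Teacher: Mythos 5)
Your overall strategy is exactly the paper's: apply the finite field method to $\mathcal{T}_n$, split the count of admissible tuples in $\mathbb{Z}_q^n$ according to whether $0$ is attained, organize the nonzero values by antipodal pairs $\{v,-v\}$, and recognize $\binom{(q-1)/2}{k}2^k k!$ as $\prod_{i=1}^k(q-(2i-1))$ before invoking the polynomial identity argument. The final formula is correct and the algebraic substitution $2m=q-1$ is handled correctly.

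However, there is a genuine error in your description of which tuples are being counted, and it contradicts the count you then write down. You assert that ``at most one coordinate can equal any given nonzero value $v$'' and conclude that ``every used value is taken exactly once.'' This is false: for $v\neq 0$ and $q$ odd, $a_i=a_j=v$ gives $a_i+a_j=2v\neq 0$, so a nonzero value may be shared by arbitrarily many coordinates. The correct constraints are only that $0$ appears at most once and that no antipodal pair $\{v,-v\}$ with $v \neq 0$ contributes both of its elements to the image. If your characterization were taken literally (all $n$ coordinate values distinct), the nonzero count would be $\binom{m}{n}2^n n!$ with no Stirling numbers at all --- the sum over $k$ with the factor $S(n,k)$ would collapse to the single term $k=n$, since a partition of $[n]$ into $k$ color classes with distinct values per coordinate forces singleton classes. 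The expression $\binom{m}{k}2^k k!\,S(n,k)$ that you actually use is the count of \emph{functions} $[n]\to\mathbb{Z}_q$ whose image meets exactly $k$ antipodal pairs, where $S(n,k)$ records the fibration of $[n]$ by shared values; this is the correct model and is what the paper counts. So your conclusion stands, but the justification as written is self-contradictory: the ``every used value is taken exactly once'' claim must be deleted and replaced by the statement that repetition of nonzero values is allowed, which is precisely what makes the Stirling numbers appear.
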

\begin{proof}
Using \cref{ffm}, we see that the characteristic polynomial of $\mathcal{T}_n$ satisfies, for large odd values of $q$,
\begin{equation*}
    \chi_{\mathcal{T}_n}(q)=|\{(a_1,\dots,a_n) \in \mathbb{Z}_q^n \mid a_i+a_j \neq 0\text{ for all } 1 \leq i < j \leq n\}|.
\end{equation*}This means that we need to count the functions $f:[n] \rightarrow \mathbb{Z}_q$ such that
\begin{enumerate}
    \item $|f^{-1}(0)| \leq 1$.
    \item $f$ can take at most one value from each of the sets
    \begin{equation*}
        \{1,-1\},\{2,-2\},\dots,\{\frac{q-1}{2},-\frac{q-1}{2}\}.
    \end{equation*}
\end{enumerate}

We split the count into the two cases. If 0 is not attained by $f$, then all values must be from
\begin{equation*}
    \{1,-1\}\cup\{2,-2\}\cup\cdots\cup\{\frac{q-1}{2},-\frac{q-1}{2}\}.
\end{equation*}with at most one value attained in each set. So, there are
\begin{equation*}
    \binom{\frac{q-1}{2}}{k} \cdot 2^k \cdot k! \cdot S(n,k)
\end{equation*}ways for $f$ to attain values from exactly $k$ of these sets. This is because we have $\binom{\frac{q-1}{2}}{k} \cdot 2^k$ ways to choose the $k$ sets and which element of each set $f$ should attain and $k!S(n,k)$ ways to choose the images of the elements of $[n]$ after making this choice. So the total number of $f$ such that 0 is not attained is
\begin{equation*}
    \sum\limits_{k=1}^{n}\binom{\frac{q-1}{2}}{k} \cdot 2^k \cdot k! \cdot S(n,k).
\end{equation*}

When 0 is attained, there are $n$ ways to choose which element of $[n]$ gets mapped to 0 and using a similar logic for choosing the images of the other elements, we get that the total number of $f$ where 0 is attained is
\begin{equation*}
    n \cdot \sum\limits_{k=1}^{n-1}\binom{\frac{q-1}{2}}{k} \cdot 2^k \cdot k! \cdot S(n-1,k).
\end{equation*}

So we get that for large $q$,
\begin{align*}
    \chi_{\mathcal{T}_n}(q)&=\sum\limits_{k=1}^{n}\binom{\frac{q-1}{2}}{k} \cdot 2^k \cdot k! \cdot S(n,k) + n \cdot \sum\limits_{k=1}^{n-1}\binom{\frac{q-1}{2}}{k} \cdot 2^k \cdot k! \cdot S(n-1,k)\\
    &=\sum_{k = 1}^n (S(n,k)+n \cdot S(n-1,k))\prod_{i=1}^k(q-(2i-1)).
\end{align*}Since $\chi_{\mathcal{T}_n}$ is a polynomial, we get the required result.
\end{proof}
\begin{remark}\label{thresh remark} Note that the absolute value of the coefficient of $t^j$ in $(t-1)(t-3)\cdots (t-(2k-1))$ counts the number of signed permutations on $[k]$ with $j$ odd cycles (see \seqnum{A039757} in the OEIS \cite{OEIS}).
Let us denote that number by $a(k,j)$.
Using this we get a compact expression for the coefficient of $t^j$ in $\chi_{\mathcal{T}_n}(t)$ as
\begin{equation*}
    \sum_{k=j}^n(S(n,k)+n \cdot S(n-1,k)) a(k,j).
\end{equation*}
\end{remark}

\begin{corollary}
The coefficient of $t^j$ in $\chi_{\mathcal{BT}_n}(t)$ is
\begin{equation*}
    \sum_{k=j}^n (S(n,k)+n \cdot S(n-1,k)) b(k,j).
\end{equation*}
where $b(k,j)$ is the coefficient of $t^j$ in $(t-3)(t-5)\cdots (t-(2k+1))$. 
\end{corollary}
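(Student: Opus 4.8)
The plan is to combine the two results established immediately above: the closed form for $\chi_{\mathcal{T}_n}(t)$ and the shift relation $\chi_{\mathcal{BT}_n}(t)=\chi_{\mathcal{T}_n}(t-2)$ from \cref{boxth corollary}. First I would substitute $t\mapsto t-2$ into the formula for $\chi_{\mathcal{T}_n}$. This turns each factor $t-(2i-1)$ into $t-2-(2i-1)=t-(2i+1)$, so that
\[
\chi_{\mathcal{BT}_n}(t)=\sum_{k=1}^n\bigl(S(n,k)+n\cdot S(n-1,k)\bigr)\prod_{i=1}^k\bigl(t-(2i+1)\bigr)=\sum_{k=1}^n\bigl(S(n,k)+n\cdot S(n-1,k)\bigr)(t-3)(t-5)\cdots(t-(2k+1)).
\]

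Next I would extract the coefficient of $t^j$ from this expression term by term: in the $k$-th summand the coefficient of $t^j$ is, by definition, $\bigl(S(n,k)+n\cdot S(n-1,k)\bigr)b(k,j)$, so summing over $k$ gives $\sum_{k=1}^n\bigl(S(n,k)+n\cdot S(n-1,k)\bigr)b(k,j)$.

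Finally I would tidy up the range of summation. Since $(t-3)(t-5)\cdots(t-(2k+1))$ has degree $k$, we have $b(k,j)=0$ whenever $k<j$, so the terms with $k<j$ contribute nothing and the sum over $k$ from $1$ to $n$ may be replaced by the sum over $k$ from $j$ to $n$, yielding the stated formula. The only point requiring care is this index bookkeeping — the shift in the product and the vanishing of $b(k,j)$ for $k<j$ — so I do not anticipate any genuine obstacle; the statement is a direct corollary of the preceding proposition together with \cref{boxth corollary}.
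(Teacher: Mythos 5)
Your proposal is correct and matches the paper's proof: both substitute $t-2$ into the closed form for $\chi_{\mathcal{T}_n}(t)$ via \cref{boxth corollary} to get $\chi_{\mathcal{BT}_n}(t)=\sum_{k=1}^n(S(n,k)+n\cdot S(n-1,k))\prod_{i=1}^k(t-(2i+1))$ and then expand the product. Your extra remark that $b(k,j)=0$ for $k<j$ justifies the lower summation limit, a bookkeeping point the paper leaves implicit.
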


\begin{proof}
Using \Cref{boxth corollary} we get
\[
\chi_{\mathcal{BT}_n}(t) = \sum_{k = 1}^n (S(n,k)+n \cdot S(n-1,k))\prod_{i=1}^k(t-(2i+1)).
\]
Expanding the product gives us the required expression.
Note that $b(k,j)=-\sum\limits_{i=0}^j a(k+1,i)$ where $a(k,j)$ is defined in \Cref{thresh remark}.
\end{proof}

\begin{remark}
We can also derive an expression for the exponential generating function for the characteristic polynomial. 
Using Problem $25(c)$ in Stanley's notes \cite[Lecture 5]{stanarr07} we get
\[\sum_{n\geq 0} \chi_{\mathcal{BT}_n}(t) \frac{x^n}{n!} = (1+x)(2e^x-1)^{\frac{(t-3)}{2}}.  \]
The generating function for the number of regions is
\[\sum_{n \geq 0} r(\mathcal{BT}_n) \frac{x^n}{n!} = \frac{e^{2x}(1-x)}{(2-e^x)^2}.\]
\end{remark}

For the sake of completeness we enumerate the coefficients of the characteristic polynomial for smaller values of $n$ (see \cref{tab:my_label}). 
Song \cite{songenum17} also computed the characteristic polynomial for $n\leq 10$, however there are typos in all the expressions for $n\geq 4$, consequently the region numbers are wrong. 
The sequence of number of regions of $\mathcal{BT}_n$ can be found at the entry \seqnum{A341769} in the OEIS \cite{OEIS}. 

\begin{table}
    \addtolength{\tabcolsep}{10pt}
    \renewcommand{\arraystretch}{1.25}
    \centering
    \begin{tabularx}{\textwidth}{|c|X|c|}
        \hline
         $n$ & \hspace{4cm}$\chi_{\mathcal{BT}_n}(t)$ & {$r(\mathcal{BT}_n)$} \\
        \hline
        2 & $t^2 -5t +6$ & 12\\
        \hline
        3 & $t^3 -9t^2 +27t -27$ & 64\\
        \hline
        4 & $t^4 -14t^3 +75t^2 -181t +165$ & 436\\
        \hline
        5 & $t^5 -20t^4 +165t^3 -695t^2 +1480t -1263$ & 3624\\
        \hline
        6 & $t^6 -27t^5 +315t^4 -2010t^3 +7320t^2 -14284t +11559$ & 35516\\
        \hline
        7 & $t^7 -35t^6 +546t^5 -4865t^4 +26460t^3 -87010t^2 +158753t -122874$ & 400544\\
        \hline
        8 & $t^8 -44t^7 +882t^6 -10402t^5 +78155t^4 -379666t^3 +1154965t^2 -1995487t +1486578$ & 5106180\\
        \hline
        9 & $t^9 -54t^8 +1350t^7 -20286t^6 +200025t^5 -1331022t^4 +5932143t^3 -16952157t^2 +27979203t -20158695$ & 72574936\\
        \hline
        10 & $t^{10} -65t^9 +1980t^8 -36840t^7 +459585t^6 -3986031t^5 +24172575t^4 -100548090t^3 +272771475t^2 -432836011t +302751327$ & 1137563980\\
        \hline
    \end{tabularx}
    \caption{Characteristic polynomial and the number of regions of $\mathcal{BT}_n$ for $n \leq 10$.}
    \label{tab:my_label}
\end{table}

\section{The signed ordered partitions}\label{section3}
In this section we prove a bijection between regions of $\mathcal{BT}_n$, and certain ordered partitions of $[-n,n]\setminus \{0\} \cup \{-\frac{1}{2},\frac{1}{2}\}$ (the notation $[-n, n]$ is used for the set $\{-n, -n+1,\dots, n-1, n \}$).
We will denote $-x_i$ by $x_{-i}$ for all $i \in [n]$.
Let $R$ be a region of $\mathcal{BT}_n$.
We write
\begin{enumerate}
    \item $i \prec_R -j$ if $x_i+x_j < 0$ in $R$ where $i \neq j$ in $[n]$.
    \item $i \succ_R -j$ if $x_i+x_j > 0$ in $R$ where $i \neq j$ in $[n]$.
    \item $i \succ_R \frac{1}{2}$ if $x_i > \frac{1}{2}$ in $R$ where $i \in [-n,n] \setminus \{0\}$.
    \item Similarly define $i \prec_R \frac{1}{2}$, $i \succ_R -\frac{1}{2}$ and $i \prec_R -\frac{1}{2}$ for any $i \in [-n,n] \setminus \{0\}$.
\end{enumerate}
So $\prec_R$ is a relation on $[-n,n]\setminus \{0\} \cup \{-\frac{1}{2},\frac{1}{2}\}$ where comparable elements are
\begin{enumerate}
    \item Elements of $[-n,n] \setminus \{0\}$ of different signs and different absolute values.
    \item Elements of $[-n,n]\setminus \{0\}$ and $\pm \frac{1}{2}$.
\end{enumerate}

\begin{remark}
The reader can consider the relation $i \prec_R -j$ as $\overset{+}{i}$ appearing before $\overset{-}{j}$ in a signed permutation on $[n]$ and similarly for other such relations.
The equivalence defined in the following lemma corresponds to choosing a signed permutation representative for each region.
This is similar to the way Spiro \cite{spiro20} associated `threshold pairs in standard form' to labeled threshold graphs.
\end{remark}

\begin{lemma}\label{equi}
Let $\sim$ be the relation defined on the set $N=[-n,n]\setminus \{0\}$ as

$a \sim b$ if the following hold:
\begin{enumerate}
    \item $a$ and $b$ are of the same sign, and
    \item there does not exist any $c \in N \cup \{-\frac{1}{2},\frac{1}{2}\}$ such that $a \prec_R c \prec_R b$ or $b \prec_R c \prec_R a$.
\end{enumerate}
Then, $\sim$ is an equivalence relation on $N$.
\end{lemma}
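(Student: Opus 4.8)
The plan is to verify the three defining properties of an equivalence relation — reflexivity, symmetry, and transitivity — with transitivity being the only nontrivial one. Reflexivity is immediate: any $a \in N$ has the same sign as itself, and there is trivially no $c$ with $a \prec_R c \prec_R a$ since $\prec_R$ is induced by strict linear inequalities on the open region $R$ and hence has no cycles. Symmetry is built into the definition, since both conditions (same sign; no element strictly between) are symmetric in $a$ and $b$.

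For transitivity, suppose $a \sim b$ and $b \sim c$. First I would observe that $a$, $b$, $c$ all share a common sign, so condition (1) holds for the pair $(a,c)$. The heart of the matter is condition (2): I must rule out the existence of $d \in N \cup \{-\frac{1}{2}, \frac{1}{2}\}$ lying strictly between $a$ and $c$ in $\prec_R$. Here the key structural input is that $\prec_R$, restricted to the comparable pairs described just before the lemma, behaves like a total preorder: for a fixed region $R$, the quantities $x_1, \dots, x_n, -x_1, \dots, -x_n$ together with the constants $\pm\frac{1}{2}$ are all pairwise comparable \emph{whenever the relevant hyperplane is in the arrangement}, and their $\prec_R$-order is consistent (it is the order inherited from the real line evaluated at any point of $R$, except that $x_i$ and $-x_i$ are declared incomparable, and two indices of the same sign with equal absolute value are never compared). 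So I would argue: WLOG all three are positive and $a \prec_R b$ (the case $b \prec_R a$ being symmetric, and if any two are $\prec_R$-equal the argument simplifies). Since $a, b$ are adjacent (nothing between them) and $b, c$ are adjacent, I claim $a, c$ are adjacent. If some $d$ satisfied $a \prec_R d \prec_R c$, then comparing $d$ with $b$: since $d \neq b$ would place $d$ either below $b$ (contradicting $a \sim b$, i.e. $a \prec_R d \prec_R b$) or above $b$ (so $b \prec_R d$; combined with whatever relation $c$ has to $b$ this contradicts $b \sim c$), and the only escape is $d = b$ or $d$ incomparable to $b$. The delicate point is precisely the ``incomparable to $b$'' possibility: $d$ could be the negative of $b$'s index, or share $b$'s absolute value with the same sign. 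I would dispatch these by noting that if $d = -b$ as a coordinate (say $b = i$, $d = -i$) then $b$ and $d$ are negatives, so they cannot both be positive unless $x_i$ and $-x_i$ straddle zero — but then one of $\pm\frac{1}{2}$, which \emph{is} comparable to both, would sit strictly between them, and one can then thread that constant into the chain to contradict adjacency of $(a,b)$ or $(b,c)$; the equal-absolute-value-same-sign case means $d$ is the same coordinate as $b$, hence $d = b$, not a new element.

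The main obstacle I anticipate is making the ``total preorder with a few incomparabilities'' picture precise enough to run the adjacency-chaining argument cleanly — in particular, carefully handling the constants $\pm\frac{1}{2}$, which are comparable to \emph{everything} in $N$ and therefore act as mandatory ``rungs'' that any two incomparable elements of $N$ of genuinely opposite real sign must straddle. Once that bookkeeping is set up, transitivity follows by the short chain-of-comparisons argument sketched above. I would organize the write-up as: (i) a preliminary observation that evaluating at a point of $R$ gives a real value to each $x_i$, making $\prec_R$ the restriction of the real order to comparable pairs, with no $\prec_R$-cycles; (ii) reflexivity and symmetry in one line each; (iii) the transitivity argument, with the incomparability cases (negatives, repeated coordinate) handled explicitly using the constants $\pm\frac{1}{2}$ as intermediaries.
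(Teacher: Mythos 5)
Your overall skeleton (reflexivity and symmetry are immediate; for transitivity, any $d$ with $a \prec_R d \prec_R c$ that is $\prec_R$-comparable to $b$ yields either $a \prec_R d \prec_R b$ or $b \prec_R d \prec_R c$, contradicting $a \sim b$ or $b \sim c$) matches the paper's and correctly isolates the one dangerous case, $d = -b$. But your resolution of that case does not work. You claim that if $b$ and $d=-b$ both occur in such a chain then one of $\pm\frac{1}{2}$ must ``sit strictly between them'' and can be threaded into the chain. Writing $a = j$, $b = i$, $c = k$ (all positive, say), the hypothesis $a \prec_R -b \prec_R c$ says $x_j < -x_i < x_k$; for one of $\pm\frac{1}{2}$ to separate $i$ from $-i$ you would need $|x_i| > \frac{1}{2}$, and this simply need not hold: at any point of the region with $|x_i| < \frac{1}{2}$ (for instance anywhere inside the central box, which is where all the bounded regions live) neither constant lies between $x_i$ and $-x_i$, and your argument then has nothing to thread in. A smaller slip: you set up transitivity with ``WLOG $a \prec_R b$,'' but two elements of the same sign are never $\prec_R$-comparable, so that case split is vacuous (harmlessly so, since your chaining step does not actually use it).

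The paper closes the $d=-b$ case differently. From $a \prec_R -b \prec_R c$ one negates to get $-c \prec_R b \prec_R -a$, and then compares $a$ with $-c$, which are comparable because $a \neq c$ have the same sign and hence different absolute values. If $a \prec_R -c$, then $a \prec_R -c \prec_R b$ contradicts $a \sim b$; otherwise $-c \prec_R a$, so $-a \prec_R c$, and then $b \prec_R -a \prec_R c$ contradicts $b \sim c$. So the element to insert into the chain is $-c$ or $-a$, not one of the constants $\pm\frac{1}{2}$; you would need to replace your $\pm\frac{1}{2}$ step with an argument of this kind for the proof to go through.
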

\begin{proof}
It is clear that $a \sim a$ for any $a \in N$ and that $a \sim b$ implies that $b \sim a$ for any $a,b \in N$.
Let $a \sim b$ and $b \sim c$ for some distinct $a,b,c \in N$.
So $a,b,c$ are of the same sign.
By definition of $\prec_R$ and $\sim$, the only possible $d \in N \cup \{-\frac{1}{2},\frac{1}{2}\}$ such that $a \prec_R d \prec_R c$ is $-b$.
We must show that this is not possible (a similar argument works if $c \prec_R -b \prec_R a$).
Suppose $a \prec_R -b \prec_R c$.
We then have $-c \prec_R b \prec_R -a$.
If $a \prec_R -c$, we will have $a\prec_R -c \prec_R b$, which contradicts $a \sim b$.
So we must have $-c \prec_R a$.
But this gives $-a\prec_R c$ and hence $b \prec_R -a \prec_R c$, which is a contradiction to $b \sim c$.
Hence $\sim$ is an equivalence on $N$.
\end{proof}

\begin{definition}
The equivalence classes of $\sim$ defined in Lemma \ref{equi} will be called \textit{boxed threshold blocks} (corresponding to the region $R$).
Positive blocks refer to those blocks that contain positive numbers and similarly we define negative blocks.
\end{definition}

\begin{remark}
Since all the elements of a boxed threshold block are of the same sign, we sometimes consider a block as a subset of $[n]$ with a sign ($+$ or $-$) assigned to it.
\end{remark}

The proof of the following lemma follows from the definition of the equivalence.
\begin{lemma}
If $B$ is a boxed threshold block then so is $-B = \{ -b : b \in B\}$.
\end{lemma}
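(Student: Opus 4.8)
The plan is to show directly that the relation $\sim$ is preserved under the sign-flip map $a\mapsto -a$, so that the image of an equivalence class is again an equivalence class. Concretely, I would fix a boxed threshold block $B$ and prove that $-B$ satisfies the two defining conditions of an equivalence class of $\sim$: first, that any two elements of $-B$ are equivalent, and second, that $-B$ is maximal with this property (equivalently, that no element outside $-B$ is $\sim$-equivalent to an element of $-B$).

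First I would recall the key symmetry of the relation $\prec_R$ coming from the defining inequalities: for $i,j\in[n]$ with $i\neq j$ we have $i\prec_R -j$ if and only if $x_i+x_j<0$ in $R$, which by rewriting is the same as $-x_j < x_i$, i.e. $j \prec_R -i$; and more generally, negating both sides of any comparison reverses it, so $a\prec_R b \iff -b \prec_R -a$ for all comparable $a,b\in N\cup\{-\tfrac12,\tfrac12\}$ (with the convention $-(\tfrac12)=-\tfrac12$). This ``order-reversing involution'' is the one fact that makes everything go through. Using it, if $a,b\in B$ then $-a,-b$ have the same sign (opposite to that of $a,b$), and if some $c\in N\cup\{-\tfrac12,\tfrac12\}$ satisfied $-a\prec_R c\prec_R -b$, then applying the involution would give $b\prec_R -c\prec_R a$, contradicting $a\sim b$; hence $-a\sim -b$, so all elements of $-B$ are pairwise $\sim$-equivalent.

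For maximality, suppose some $d\in N$ with $d\notin -B$ satisfies $d\sim -b$ for some $b\in B$. Then $-d\notin B$ and, by the argument of the previous paragraph applied to the pair $\{d,-b\}$, we get $-d\sim b$; but then $-d$ is $\sim$-equivalent to an element of $B$ yet lies outside $B$, contradicting that $B$ is an equivalence class. Therefore $-B$ is a full equivalence class, i.e.\ a boxed threshold block.

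I do not expect a genuine obstacle here: the only thing to be careful about is bookkeeping the sign conventions — in particular that $\prec_R$ is defined on $N\cup\{-\tfrac12,\tfrac12\}$ and that the involution $a\mapsto -a$ swaps $\tfrac12$ and $-\tfrac12$ while reversing all comparisons — and making sure the ``same sign'' clause (condition (1) of $\sim$) is handled, which is immediate since negation sends positives to negatives and vice versa. Given that the excerpt explicitly says ``the proof follows from the definition of the equivalence,'' a clean writeup of the order-reversing symmetry and the two bullet points above suffices.
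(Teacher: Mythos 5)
Your proof is correct and is essentially the argument the paper has in mind (the paper omits it, saying only that it ``follows from the definition of the equivalence''): the order-reversing involution $a\mapsto -a$ on $N\cup\{-\tfrac12,\tfrac12\}$ carries the two defining conditions of $\sim$ to themselves, so it maps equivalence classes to equivalence classes. Both your pairwise-equivalence step and the maximality step check out.
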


\begin{proposition}\label{totord}
Define an order $<$ on the set of boxed threshold blocks along with $\pm \frac{1}{2}$ as follows:
\begin{enumerate}
    \item $B < B'$ where $B,B'$ are boxed threshold blocks if there exists a sequence $c_0,c_1,\dots,c_k$ of elements in $N \cup \{-\frac{1}{2},\frac{1}{2}\}$ such that $c_0 \prec_R c_1 \prec_R \cdots \prec_R c_k$, $c_0 \in B$ and $c_k \in B'$.
    \item $-\frac{1}{2} < \frac{1}{2}$.
    \item $B < \frac{1}{2}$ if $b \prec_R \frac{1}{2}$ for some $b \in B$. Similarly define other relations between blocks and $\pm \frac{1}{2}$.
\end{enumerate}
This is a total order in all cases except when there is a unique $i \in [n]$ such that $-\frac{1}{2} \prec_R i \prec_R \frac{1}{2}$.
\end{proposition}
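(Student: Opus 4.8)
The plan is to verify the three defining properties of a total order—well-definedness (the relation is unambiguous), antisymmetry/irreflexivity, and totality (trichotomy)—for the proposed relation $<$ on the set of boxed threshold blocks together with $\pm\frac12$, and then pin down exactly when trichotomy can fail. First I would observe that $<$ restricted to the blocks is essentially the transitive closure of $\prec_R$ read at the level of equivalence classes: $B<B'$ iff some element of $B$ is connected to some element of $B'$ by an increasing $\prec_R$-chain. Since $\prec_R$ is itself induced by the linear order of the real numbers $x_i$ (and the constants $\pm\frac12$) inside the region $R$, any such chain forces the corresponding real values to be strictly increasing; in particular no $\prec_R$-chain can return to the block it started from, which gives irreflexivity, and concatenating chains gives transitivity. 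The only subtlety in well-definedness is that a block $B$, viewed as a set of indices, has \emph{two} sign-conjugate avatars $B$ and $-B$, and the constants $\pm\frac12$ sit between the positive and negative worlds; I would check that the clauses (2) and (3) are consistent with clause (1) by noting $-\frac12<\frac12$ is forced since $x_i=-\frac12$ and $x_i=\frac12$ are distinct hyperplanes with $-\frac12<\frac12$ as reals.

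Next I would establish totality. Given two blocks $B,B'$, pick representatives $b\in B$, $b'\in B'$; inside $R$ the real numbers in question are all nonzero and distinct from $\pm\frac12$ (no point of $R$ lies on a hyperplane), so for any two \emph{comparable} elements exactly one of $\prec_R$, $\succ_R$ holds. The issue is that not all pairs of elements are comparable: two elements of the same sign are not directly related by $\prec_R$, and crucially $i$ and $-i$ (same absolute value, opposite sign) are never comparable, because $x_i+x_i=2x_i$ is not a hyperplane of $\mathcal{BT}_n$. So I would argue that for two positive blocks $B,B'$ one always finds a chain through a negative block or through the constants: since $\mathcal{BT}_n$ contains every hyperplane $x_i+x_j=0$ with $i\neq j$, for $i\in B$ and $j\in B'$ with $i\neq j$ the elements $i$ and $-j$ \emph{are} comparable, and this lets one compare $B$ with $B'$ via $i\prec_R -j$ or $-j\prec_R i$ (and symmetrically). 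The one configuration where this breaks down is when $B=\{i\}$ and $B'=\{i\}$ would be the only candidates, i.e.\ when there is an index $i$ such that no comparison routes through it; tracing this, the obstruction is exactly a singleton block $\{i\}$ and its negative $\{-i\}$ straddling $\frac12$ and $-\frac12$ with nothing else between, i.e.\ the stated exceptional case $-\frac12\prec_R i\prec_R\frac12$ for a unique $i$. In that case the blocks $\{i\}$ and $\{-i\}$ are incomparable to each other \emph{and} their position relative to $\pm\frac12$ leaves a genuine ambiguity, so $<$ fails to be total; conversely if no such $i$ exists, every pair gets compared.

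The main obstacle I anticipate is the totality step: carefully ruling out that \emph{two distinct} elements $i\ne j$ of the same absolute-value-free situation remain mutually incomparable after passing to blocks, and isolating precisely the single bad configuration. Concretely, I would argue by contradiction: if $B\not<B'$ and $B'\not<B$ for two positive blocks, then for \emph{all} $i\in B$, $j\in B'$ we must have $i\prec_R -j$ \emph{and} $j\prec_R -i$ simultaneously fail to give a consistent chain — and one shows this forces $|B|=|B'|=1$, say $B=\{i\}$, $B'=\{j\}$ with $i\ne j$, whence $i\prec_R -j$ and $j\prec_R -i$ would hold, giving $i\prec_R -j$ and $-i\succ_R j$, i.e.\ a chain $j\prec_R -i$ together with $i\prec_R-j$; combining via the constants $\pm\frac12$ then yields an ordering of $\{i\}$ and $\{j\}$ unless $i=j$, contradiction. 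Thus the only survivors are a single block and its negative, straddling $\pm\frac12$, which is the claimed exception. I would present the positive-block case in full and remark that the negative-block case and the block-vs-$\pm\frac12$ cases follow by the sign-flip symmetry $B\mapsto -B$ from the earlier lemma and the fact that $\pm\frac12$ are comparable to every element of $N$.
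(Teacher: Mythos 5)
Your proposal has two genuine gaps, and they sit exactly where the actual work of the proof lies.

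First, irreflexivity. You argue that a chain $c_0 \prec_R c_1 \prec_R \cdots \prec_R c_k$ forces the values to increase, ``in particular no $\prec_R$-chain can return to the block it started from.'' This does not follow: a boxed threshold block is an equivalence class under $\sim$, not a level set of coordinate values, so it typically contains several elements with distinct values, and monotonicity of the values in no way prevents a chain from starting at one element of $B$ and ending at a larger-valued element of the same $B$. The correct argument must use the defining property of $\sim$: if $c_1 \neq -c_k$ then $c_1$ and $c_k$ are comparable and $c_0 \prec_R c_1 \prec_R c_k$ contradicts $c_0 \sim c_k$; symmetrically $c_{k-1} = -c_0$ is forced; and then $c_0 \prec_R -c_k$ together with $-c_0 \prec_R c_k$ is a direct contradiction. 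Your sketch skips this mechanism entirely.

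Second, and more seriously, the exceptional case. Two distinct blocks of the same sign are always comparable straight from the definition of $\sim$ (non-equivalent same-sign elements admit a separating element, which is a length-two chain), and two opposite-sign blocks not of the form $\{i\},\{-i\}$ contain a directly comparable pair; so the entire difficulty is the pair $\{i\},\{-i\}$ with $-\frac{1}{2} \prec_R i \prec_R \frac{1}{2}$. Your ``main obstacle'' paragraph instead analyses two positive singleton blocks $\{i\},\{j\}$ with $i \neq j$ --- a case that is already settled and is not where totality can fail --- and for the genuine case you only assert that ``tracing this, the obstruction is exactly \ldots the stated exceptional case.'' What must actually be proved is the converse direction: if $i$ is \emph{not} the unique index with $-\frac{1}{2} \prec_R i \prec_R \frac{1}{2}$, then $\{i\}$ and $\{-i\}$ are nevertheless comparable. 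The paper does this by placing $\{i\}$ among the ordered positive blocks $B_1 < \cdots < B_k$ and running a three-way case analysis ($\{i\}=B_1$, $\{i\}=B_k$, or $\{i\}=B_m$ with $1<m<k$), in each case constructing explicit chains through neighbouring blocks and deriving a contradiction from the assumed incomparability. No version of that analysis, nor any substitute for it, appears in your proposal, so the statement's precise characterization of when totality fails is not established.
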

\begin{proof}
The transitivity of this order is straightforward.
If we show that $B<B$ is not possible for any block, the order is well-defined.
Suppose $c_0 \prec_R c_1 \prec_R \cdots \prec_R c_k$ where $c_0,c_k \in B$.
If $c_1 \neq -c_k$ we get a contradiction to $c_0 \sim c_k$ and similarly if $c_{k-1} \neq -c_1$.
So we must have $c_1 = -c_k$ and $c_{k-1}=-c_1$.
But then we get $c_1 \prec_R -c_k$ and $-c_1 \prec_R c_k$, which is a contradiction.

We will now show that this order is a total order in all cases except when there is unique $i \in [n]$ such that $-\frac{1}{2} \prec_R i \prec_R \frac{1}{2}$.
The relationship of any block with $\pm \frac{1}{2}$ is always defined.
So we only have to check that any two blocks are comparable.
Let $B,B'$ be boxed threshold blocks.
If $B,B'$ are distinct blocks of the same sign, they are comparable by definition of the equivalence relation.
If they are of opposite sign and not of the form $\{i\},\{-i\}$ for some $i \in [n]$, they are comparable since there is some $a \in B$ and $b \in B'$ of opposite signs such that $|a| \neq |b|$.
So we have to deal with the case $B=\{i\}$ and $B'=\{-i\}$ for some $i \in [n]$.
If $i \succ_R \frac{1}{2}$ or $i \prec_R -\frac{1}{2}$, $\{-i\}$ and $\{i\}$ are comparable.

Suppose $-\frac{1}{2} \prec_R i \prec_R \frac{1}{2}$ and $\{i\}$ and $\{-i\}$ are blocks and they are not comparable.
We have already seen that all the positive blocks are comparable and so are all the negative blocks.
Let the order on the positive blocks be $B_1 < B_2 < \cdots < B_k$ and hence the order on the negative blocks is $-B_k < \cdots < -B_2 < -B_1$.

Suppose $B_1 = \{i\}$.
Since $i$ is not the only number satisfying $-\frac{1}{2} \prec_R i \prec_R \frac{1}{2}$, we have $b \prec_R \frac{1}{2}$ for all $b \in B_2$.
Taking some $b \in B_2$, since $i \notin B_2$, there is some $k \in [n]$ such that $i \prec_R -k \prec_R b$.
Since $\{-i\}$ is a block, and $k \neq i$, there is some $l \in [n]$ such that $-i \prec_R l \prec_R -k$ (we cannot have $-k \prec_R l \prec_R -i$ since this would mean $\{i\}$ and $\{-i\}$ are comparable).
But this gives $k \prec_R -l \prec_R i$, which contradicts the fact that there is no positive block less than $\{i\}$.
We get a similar contradiction if $B_k=\{i\}$.

Suppose $B_m=\{i\}$ for some $m \in [2,k-1]$.
Take some $b_p \in B_{m-1}$ and $b_s \in B_{m+1}$.
There are three possibilities:
\begin{enumerate}
    \item There are $k_p,k_s \in [n]$ such that
\begin{equation*}
    b_p \prec_R -k_p \prec_R i \prec_R -k_s \prec_R b_s.
\end{equation*}Since $k_p,k_s \neq i$, and $\{-i\}$ is a block, there are $c_p,c_s \in [n]$ such that
\begin{align*}
    -k_p &\prec_R c_p \prec_R -i\\
    -i &\prec_R c_s \prec_R -k_s
\end{align*}where the other possibilities are not possible since $\{i\}$ and $\{-i\}$ are not comparable. So we get
\begin{equation*}
    b_p \prec_R -k_p \prec_R c_p \prec_R -i \prec_R c_s \prec_R -k_s \prec_R b_s.
\end{equation*}But this means that $c_p$ and $c_s$ are in blocks between $B_{m-1}$ and $B_{m+1}$ and hence in $\{i\}$, which is a contradiction.
\item There is no $k_s \in [n]$ such that $i \prec_R -k_s <b_s$. So we must have $i \prec_R \frac{1}{2} \prec_R b_s$, $-\frac{1}{2} \prec_R b_p \prec_R \frac{1}{2}$ and $k_p \in [n]$ such that
\begin{equation*}
    b_p \prec_R -k_p \prec_R i \prec_R \frac{1}{2} \prec_R b_s.
\end{equation*}This is because $i$ satisfies $-\frac{1}{2} \prec_R i \prec_R \frac{1}{2}$ and is not the only such number. Since $k_p \neq i$, and $\{-i\}$ is a block, there is some $c_p \in [n]$ such that
\begin{equation*}
    -k_p \prec_R c_p \prec_R -i
\end{equation*}where the other possibility is not possible since $\{i\}$ and $\{-i\}$ are not comparable. So we get
\begin{equation*}
    -\frac{1}{2} \prec_R b_p \prec_R -k_p \prec_R c_p \prec_R -i
\end{equation*}which, along with previous observations, gives
\begin{equation*}
    i \prec_R -c_p \prec_R k_p \prec_R \frac{1}{2} \prec_R b_s.
\end{equation*}But this means $k_p$ is in a positive block after $\{i\}$ but before $B_{m+1}$, which is a contradiction.
\item The case when there is no $k_p \in [n]$ such that $b_p \prec_R -k_p \prec_R i$ is handled similarly.
\end{enumerate}

In the case when there is unique $i \in [n]$ such that $-\frac{1}{2} \prec_R i \prec_R \frac{1}{2}$, the only order that is not defined is between the blocks $\{i\}$ and $\{-i\}$ and the order is of the form
\begin{equation}\label{onlyonemiddle}
    -B_k < \cdots < -B_2 < -\frac{1}{2} < \{i\},\{-i\} < \frac{1}{2} < B_2 < \cdots < B_k
\end{equation}where $B_2,\dots,B_k$ are blocks (not necessarily positive).
This can be proved using similar arguments as before.
\end{proof}

If there is a  unique $i \in [n]$ such that $-\frac{1}{2} \prec_R i \prec_R \frac{1}{2}$ and the blocks are ordered as in \eqref{onlyonemiddle}, we make the convention that $\{-i\} < \{i\}$ if $B_2$ is a positive block and $\{i\} < \{-i\}$ if $B_2$ is a negative block.
Once this convention is made, we get a total order on the boxed threshold blocks, $\frac{1}{2}$ and $-\frac{1}{2}$ in all cases.
The proof of the following lemma follows from the definition of the order.
\begin{lemma}\label{negord}
For any blocks $B, B'$, $B < B'$ implies that $-B' < -B$.
Similarly, other usual properties of taking the negative hold; such as $B < \frac{1}{2}$ implies that $-\frac{1}{2} < -B$.
\end{lemma}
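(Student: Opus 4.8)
The statement to prove is \Cref{negord}: for any blocks $B, B'$, $B < B'$ implies $-B' < -B$, and similarly compatibility of the order with negation holds for comparisons against $\pm\frac12$.

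My plan is to unwind the definition of $<$ from \Cref{totord} and observe that $\prec_R$ itself is anti-symmetric under the sign flip $x \mapsto x_{-i} = -x_i$. The key structural fact is that if $x_a + x_b < 0$ in the region $R$ (equivalently $a \prec_R -b$), then $x_{-a} + x_{-b} > 0$ in $R$, i.e. $-a \succ_R b$, equivalently $b \prec_R -a$. In the same way, $i \prec_R \frac12$ means $x_i < \frac12$ in $R$, which forces $-x_i > -\frac12$, i.e. $-i \succ_R -\frac12$, so $-\frac12 \prec_R -i$; and $-\frac12 < \frac12$ is flipped to $-\frac12 < \frac12$ (unchanged, since $-(\tfrac12) = -\tfrac12$ and $-(-\tfrac12) = \tfrac12$). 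Thus negation reverses every instance of the relation $\prec_R$ on $N \cup \{-\frac12, \frac12\}$ while sending $\pm\frac12$ to $\mp\frac12$.

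First I would record this elementary observation as the crux: the map $\sigma$ sending each element $c \in N \cup \{-\frac12,\frac12\}$ to $-c$ satisfies $c \prec_R c'$ if and only if $\sigma(c') \prec_R \sigma(c)$. Next, I would apply it to a witnessing chain. Suppose $B < B'$ where $B, B'$ are genuine blocks; by Definition-part (1) of \Cref{totord} there is a sequence $c_0 \prec_R c_1 \prec_R \cdots \prec_R c_k$ with $c_0 \in B$, $c_k \in B'$. Applying $\sigma$ term by term and reversing the order yields $-c_k \prec_R -c_{k-1} \prec_R \cdots \prec_R -c_0$, a chain from $-c_k \in -B'$ to $-c_0 \in -B$ (using the already-established \Cref{equi}-based lemma that $-B$ and $-B'$ are again blocks). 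Hence $-B' < -B$. For the comparisons with $\pm\frac12$, the same substitution handles Definition-parts (2) and (3): $B < \frac12$ witnessed by some $b \in B$ with $b \prec_R \frac12$ gives $-\frac12 \prec_R -b$ with $-b \in -B$, so $-\frac12 < -B$; the other three cases ($\frac12 < B$, $-\frac12 < B$, $B < -\frac12$) are entirely analogous, and $-\frac12 < \frac12$ maps to itself.

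The one subtlety is the tie-breaking convention introduced after \Cref{totord} for the exceptional case where there is a unique $i \in [n]$ with $-\frac12 \prec_R i \prec_R \frac12$ and the order has the shape of \eqref{onlyonemiddle}. I would check this case by hand: the convention declares $\{-i\} < \{i\}$ precisely when $B_2$ is a positive block, and $\{i\} < \{-i\}$ when $B_2$ is negative. Negating the whole chain \eqref{onlyonemiddle} turns the block immediately above $\frac12$, namely $B_2$, into $-B_2$ sitting immediately below $-\frac12$, and the relevant comparison becomes the one between $-\{i\} = \{-i\}$ and $-\{-i\} = \{i\}$; one verifies that $-B_2$ being positive (i.e. $B_2$ negative) is exactly the condition under which the convention gives $\{i\} < \{-i\}$, which is indeed $-\{-i\} < -\{i\}$, matching $\{-i\} < \{i\}$ reversed. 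So the convention is symmetric under negation and the implication survives. I expect this bookkeeping for the exceptional case to be the only place requiring care; the main argument is the routine application of the sign-flip symmetry of $\prec_R$, so I would keep the write-up short, stating the $\sigma$-symmetry, applying it to a witnessing chain, and then dispatching the convention case in one or two sentences.
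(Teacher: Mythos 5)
Your proof is correct; the paper gives no argument here (it states only that the lemma ``follows from the definition of the order''), and your sign-flip symmetry of $\prec_R$ applied to a witnessing chain is exactly the intended justification. The only comment is that your check of the tie-breaking convention can be shortened further: since $-\{i\}=\{-i\}$ and $-\{-i\}=\{i\}$, the implication $B<B'\Rightarrow -B'<-B$ for that pair is literally the same statement as its hypothesis, so no case analysis on the sign of $B_2$ is needed.
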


\begin{proposition}
The total order associated to a region (defined in Proposition \ref{totord}) is always of one of the following forms:
\begin{enumerate}
    \item $-B_k < \cdots < -B_2 < -B_1 < -\frac{1}{2} < \frac{1}{2} < B_1 < B_2 < \cdots < B_k$ where $B_i$ and $B_{i+1}$ are of opposite signs for all $i \in [k-1]$.
    \item $-B_k < \cdots < -B_{l+1} < -\frac{1}{2}< -B_l < \cdots < -B_1 < B_1 < \cdots < B_l < \frac{1}{2} < B_{l+1} < \cdots < B_k$ where the size of $B_1$ is greater than 1 and $B_i$ and $B_{i+1}$ are of opposite signs for all $i \in [l-1]$ and $i \in [l+1,k-1]$.
    \item $-B_k < \cdots < -B_2 < -\frac{1}{2} < -B_1 < B_1 < \frac{1}{2} < B_2 < \cdots < B_k$ where the size of $B_1$ is 1, $B_1$ and $B_2$ are of the same sign, and $B_i$ and $B_{i+1}$ are of opposite signs for all $i \in [2,k-1]$.
\end{enumerate}
The association of such an order is a bijection between the regions of $\mathcal{BT}_n$ and total orders of the types mentioned above.
\end{proposition}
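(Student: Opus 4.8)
The plan is to argue in two directions: first, that every region of $\mathcal{BT}_n$ produces, via the construction of Propositions~\ref{totord} and the convention fixed afterwards, a total order of one of the three stated types; and second, that every such total order arises from exactly one region. The backbone of the forward direction is already in place: Lemma~\ref{equi} gives well-defined blocks, Proposition~\ref{totord} together with the tie-breaking convention gives a genuine total order on the blocks and $\pm\tfrac12$, and Lemma~\ref{negord} gives the antipodal symmetry $B<B'\Rightarrow -B'<-B$. So the first thing I would do is use Lemma~\ref{negord} to show the order is ``centrally symmetric'': the negative blocks below $-\tfrac12$ must be the mirror images, in reverse order, of the positive blocks above $\tfrac12$, and whatever sits strictly between $-\tfrac12$ and $\tfrac12$ must be a union of blocks closed under negation. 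This immediately reduces the classification to understanding what can appear in the ``middle zone'' $(-\tfrac12,\tfrac12)$.

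Next I would carry out the case analysis on the middle zone. A block $B$ with $-\tfrac12<b<\tfrac12$ for $b\in B$ satisfies $-B$ also in the middle zone (by Lemma~\ref{negord}); if $B\neq -B$ then $B$ and $-B$ are comparable (as shown in the proof of Proposition~\ref{totord}, since a positive and a negative block that are not $\{i\},\{-i\}$ are always comparable), and I claim at most one such pair $\{B,-B\}$ can occur with $|B|>1$ — if $b\in B$ and $b'\in B'$ are distinct positive elements in two such blocks then $b,b'$ and $-b,-b'$ all lie in $(-\tfrac12,\tfrac12)$, and chasing $\prec_R$ relations as in the proof of Proposition~\ref{totord} forces a contradiction with the block structure. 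Combined with the observation that the singleton pair $\{i\},\{-i\}$ is exactly the situation handled by the convention, this yields precisely the trichotomy: (1) nothing strictly between $\pm\tfrac12$ (so the alternating-sign chain runs $-\tfrac12<\tfrac12$ in the center, with $B_i,B_{i+1}$ of opposite sign because two consecutive blocks of the \emph{same} sign would be forced to merge by the definition of $\sim$); (2) a single pair of blocks $-B_1<B_1$ with $|B_1|>1$ straddling the center and $-\tfrac12<-B_1,\; B_1<\tfrac12$ as the barriers; (3) a single singleton pair $\{-i\}<\{i\}$ (or its mirror) straddling the center, where $B_1=\{i\}$ has size $1$ and is allowed to be the same sign as its neighbour $B_2$ precisely because the $\sim$-merging obstruction disappears once $\pm\tfrac12$ separates them — this is the content of clause~(2) of Lemma~\ref{equi}. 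The adjacency/opposite-sign conditions in each type are then just restatements of: consecutive blocks of the same sign with no $\pm\tfrac12$ between them violate the definition of $\sim$.

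For the reverse direction — that the map region $\mapsto$ total order is a bijection onto orders of these types — I would argue injectivity first. Two regions $R,R'$ with the same associated order: I claim the order determines, for every pair $i\neq j$ in $[n]$, the sign of $x_i+x_j$ and, for every $i$, the position of $x_i$ relative to $\pm\tfrac12$. Indeed $x_i+x_j<0$ in $R$ iff $i\prec_R -j$ iff (reading off the total order) the block of $i$ precedes the block of $-j$; and $x_i>\tfrac12$ iff the block of $i$ is above $\tfrac12$, etc. Since $\mathcal{BT}_n$ is exactly the arrangement $\{x_i+x_j=0\}\cup\{x_i=\pm\tfrac12\}$, a region is determined by this sign data, so $R=R'$. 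For surjectivity I would take an abstract total order $\mathcal{O}$ of one of the three types, build the corresponding strict inequalities, and exhibit an explicit point satisfying them: assign to the blocks, in order, suitable real values — negative blocks get negative values of large absolute value decreasing away from the center, the middle blocks get values in $(-\tfrac12,\tfrac12)$, and within a block all coordinates get a common value (or nearly common, perturbed to respect nothing since a block imposes $x_a=x_b$ only ``virtually'' — actually within a block $\{a,b,\dots\}$ of positive sign we just need $x_a,x_b,\dots$ all in the same open interval between the neighbouring blocks, with no $x_i+x_j=0$ or $x_i=\pm\tfrac12$ hit, which is arrangeable). One must check this point lies in \emph{some} region whose associated order is $\mathcal{O}$; the only subtlety is the convention-governed ordering of a straddling singleton pair $\{i\},\{-i\}$, where the choice of which of $x_i,-x_i$ is positive is exactly what the convention records, so both sub-cases are realized.

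The main obstacle I expect is the verification, in the reverse direction, that the point I construct really induces the \emph{blocks} prescribed by $\mathcal{O}$ rather than a coarser partition — i.e. that two coordinates I put in the ``same block'' genuinely satisfy the $\sim$-relation (no $c$ with $a\prec_R c\prec_R b$) and two in different consecutive blocks genuinely fail it. This forces me to be careful about \emph{gaps}: between the values assigned to consecutive blocks I must leave room for the negatives of elements of other blocks to land (or not land), and the type~(2)/type~(3) distinction — whether a straddling block has size $>1$ or $=1$ — is precisely dictated by whether such a separating element can exist. I would handle this by choosing the block values from a rapidly growing/shrinking sequence (e.g. powers of a small $\varepsilon$ near the center, and large distinct integers-plus-$\varepsilon$ away from it) so that all pairwise sums $x_a+x_b$ are distinct and their signs are forced purely by which block is ``more extreme,'' making the induced $\prec_R$ match $\mathcal{O}$ on the nose. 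The symmetry from Lemma~\ref{negord} keeps the bookkeeping manageable, since I only need to design the positive half and reflect.
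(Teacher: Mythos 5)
Your reduction to the ``middle zone'' $(-\frac{1}{2},\frac{1}{2})$ is a reasonable starting point, but the classification you extract from it is wrong, and this breaks the forward direction of your argument. You claim that at most one pair $\{B,-B\}$ of blocks with $|B|>1$ can lie strictly between $-\frac{1}{2}$ and $\frac{1}{2}$, and accordingly describe type~(2) as ``a single pair of blocks $-B_1<B_1$ straddling the center.'' This is false, and it also misreads the statement: type~(2) explicitly allows $2l$ blocks $-B_l<\cdots<-B_1<B_1<\cdots<B_l$ inside the middle zone, with no size restriction on $B_2,\dots,B_l$, and such regions exist. Concretely, take $n=4$ and the point $(0.1,\,0.11,\,-0.2,\,-0.21)$: here $x_1+x_2>0$, all other pairwise sums are negative, and every coordinate lies in $(-\frac{1}{2},\frac{1}{2})$. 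One checks directly from the definition of $\sim$ that the blocks are $\{1,2\}$, $\{3,4\}$, $\{-1,-2\}$, $\{-3,-4\}$, ordered as $-\frac{1}{2}<\{3,4\}<\{-1,-2\}<\{1,2\}<\{-3,-4\}<\frac{1}{2}$, i.e., type~(2) with $k=l=2$ and two size-two pairs in the middle zone. So the ``contradiction by chasing $\prec_R$ relations'' that you invoke for two such pairs does not exist, and your trichotomy omits most orders of type~(2). What actually has to be shown (and what the paper shows) is much weaker: when more than one $i\in[n]$ satisfies $-\frac{1}{2}\prec_R i\prec_R\frac{1}{2}$, only the \emph{innermost} block $B_1$ is forced to have size greater than $1$. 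The paper's argument is short: if $B_1=\{a\}$, pick $b\in B_2$; since $b$ and $-a$ have the same sign and lie in different blocks, there is some $k$ with $-a\prec_R k\prec_R b$, and such a $k$ must lie in a block strictly between $-B_1$ and $B_2$, hence in $B_1=\{a\}$, which is impossible since $k\neq a$. You would need to replace your middle-zone analysis with an argument of this kind.

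Your reverse direction is essentially the paper's (read the sign of each $x_i+x_j$ and of $x_i\mp\frac{1}{2}$ off the order for injectivity; build an explicit point for surjectivity), though it is more elaborate than necessary: since $x_a=x_b$ is not a hyperplane of $\mathcal{BT}_n$, you can simply assign a single common value $c_i$ to all coordinates indexed by $B_i$, with $0<c_1<\cdots<c_k$ interleaved with $\frac{1}{2}$ as the order dictates (negated for negative blocks); no perturbation or rapidly growing sequence is needed, and the induced blocks are then exactly the prescribed ones. But as written, the forward direction proves a false classification, so the proof is not correct.
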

\begin{proof}
The fact that the first half of the order is the negative of the second follows from Lemma \ref{negord}.
By the definition of blocks, for any two blocks of the same sign which are in the same position with respect to $\pm \frac{1}{2}$, there is some block of opposite sign between them.

The first form is when there is no $i \in [n]$ such that $-\frac{1}{2} \prec_R i \prec_R \frac{1}{2}$.
The third form is by the convention we made at the end of the proof of Proposition \ref{totord}, when there is a unique $i \in [n]$ such that $-\frac{1}{2} \prec_R i \prec_R \frac{1}{2}$.

So we have to show that when there is more than one $i \in [n]$ such that $-\frac{1}{2} \prec_R i \prec_R \frac{1}{2}$, the block $B_1$ has size greater than 1.
Suppose $B_1=\{a\}$ for some $a \in N$.
Let $b \in B_2$.
$b$ has the same sign as $-a$ and is in a different block.
Hence there is some $k \in N$ of opposite sign to $-a$ and $b$ such that $-a \prec_R k \prec_R b$.
But this means $k$ is in a block between $\{-a\}$ and $B_2$ and hence in $\{a\}$, which is a contradiction since $k \neq a$.

We will now show that the association of such an order is a bijection between the regions of $\mathcal{BT}_n$ and total orders of the types mentioned above.
First note that, from the total order associated to a region, we can get back the inequalities describing the region as follows:
For any $i \neq j$ in $[n]$, $x_i+x_j > 0$ if and only if the block containing $-j$ is before that containing $i$ and the relationship between $x_i$ and $\pm \frac{1}{2}$ is obtained in the same way.
On the other hand, given an order of one of the forms given above, choosing some real numbers $0< c_1 < c_2 < \cdots < c_k$ such that $c_i < \frac{1}{2}$ if $B_i < \frac{1}{2}$ and $c_i > \frac{1}{2}$ if $B_i > \frac{1}{2}$ and putting $x_a=c_i$ for all $a \in B_i$ for all $i \in [k]$, gives a point satisfying the required inequalities.
It can also be shown that different such orders correspond to different regions.
\end{proof}

Hence, to count the number of regions of $\mathcal{BT}_n$, we just have to count the number of orders of the forms mentioned in Proposition \ref{totord}.
Note that the first half of the order is the negative of the second, so we just consider the second half.
That is, we count orders of the following types, where in all cases, $B_1,\dots,B_k$ is a partition of $[n]$ with a sign assigned to each block:
\begin{enumerate}
    \item $\frac{1}{2} < B_1 < B_2 < \cdots < B_k$ where $B_i$ and $B_{i+1}$ are of opposite signs for all $i \in [k-1]$.
    \item $B_1 < \cdots < B_l < \frac{1}{2} < B_{l+1} < \cdots < B_k$ where the size of $B_1$ is greater than 1 and $B_i$ and $B_{i+1}$ are of opposite signs for all $i \in [l-1]$ and $i \in [l+1,k-1]$.
    \item $B_1 < \frac{1}{2} < B_2 < \cdots < B_k$ where the size of $B_1$ is 1, $B_1$ and $B_2$ are of the same sign, and $B_i$ and $B_{i+1}$ are of opposite signs for all $i \in [2,k-1]$.
\end{enumerate}

\begin{proposition}\label{count}
The total number of orders of the forms mentioned above is
\begin{equation*}
    4 \cdot a(n)+\sum_{k=1}^{n} 4  (k!-(k-1)!) (k \cdot S(n,k)-n \cdot S(n-1,k-1)).
\end{equation*}Here $a(n)$ is the $n^{th}$ ordered Bell number.
\end{proposition}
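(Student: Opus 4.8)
The plan is to count the three families of orders separately and then combine. Throughout, an order of any of the three types is determined by an ordered set partition $B_1, \dots, B_k$ of $[n]$ together with a choice of signs and the position of the divider $\tfrac12$, subject to the stated alternation constraints. I will organize the count by $k$, the number of blocks.

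First I would handle Type (1): orders $\tfrac12 < B_1 < \cdots < B_k$ with $B_i, B_{i+1}$ of opposite sign for all $i$. The underlying unsigned ordered set partition can be chosen in $k! \, S(n,k)$ ways, and once the sign of $B_1$ is fixed, the alternation condition forces all remaining signs; so there are $2 \cdot k! \, S(n,k)$ such orders with exactly $k$ blocks. Summing over $k$ gives $2 \sum_{k \ge 1} k! \, S(n,k) = 2\, a(n)$, where $a(n)$ is the ordered Bell number. By the symmetry $B_i \mapsto -B_i$ (reversing the role of the two global halves — this is the reflection already implicit in Lemma \ref{negord}), orders that look like ``$\tfrac12$ below everything'' and ``$\tfrac12$ above everything'' are interchanged, but in fact Type (1) as written only puts $\tfrac12$ at the very bottom; I should double-check whether a symmetric ``$B_1 < \cdots < B_k < \tfrac12$'' family is already subsumed in Type (2) with $l = k$. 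Resolving this bookkeeping — making sure each order of the global form in the previous proposition is counted exactly once by exactly one of the three second-half types, and pinning down where the factor $4$ (rather than $2$) in $4\,a(n)$ comes from — is where I expect the main subtlety to lie.

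Next, Types (2) and (3): here the divider $\tfrac12$ sits strictly between blocks, splitting the sequence into a lower part $B_1 < \cdots < B_l$ and an upper part $B_{l+1} < \cdots < B_k$, with alternation imposed \emph{within} each part but no constraint across the divider. For fixed $k$ and fixed split point $l$, the count of unsigned ordered set partitions is still $k!\,S(n,k)$; the sign choices contribute a factor $2$ for the lower part and $2$ for the upper part (each part's signs are determined by the sign of its first block), giving $4$ in all — except that Types (2) and (3) impose an extra condition tying $B_1$ to the rest: in Type (2), $|B_1| > 1$; in Type (3), $|B_1| = 1$ and $B_1, B_2$ have the same sign (which, combined with alternation from $B_2$ onward, is just the ``non-alternating at the divider from below'' case). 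The clean way to see the combined contribution is: for each $k$, first choose which of the $k$ blocks is $B_1$-through-the-configuration and sum over the divider position, producing a factor of $k$ (the divider can go in any of $k$ gap-positions relative to a linear order of $k$ blocks, but one of those is the Type (1) position $\tfrac12 < B_1$, already counted). This is exactly the origin of the $k \cdot S(n,k)$ term. The subtracted term $n \cdot S(n-1,k-1)$ should come from excluding the forbidden configurations: the orders where $\tfrac12$ immediately follows a singleton block $B_1$ \emph{and} that singleton's sign breaks alternation — equivalently, fixing the lone element of $[n]$ in that singleton ($n$ choices) and partitioning the remaining $n-1$ elements into $k-1$ blocks ($S(n-1,k-1)$ ways, times the appropriate ordered/signed factors). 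The factor $(k! - (k-1)!) = (k-1)!\,(k-1)$ packages the ``ordered arrangement of $k$ blocks, minus the degenerate boundary case'' count.

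Finally I would assemble: Type (1) contributes $2\,a(n)$; adding the reflected copies and the ``$\tfrac12$ at the top'' mirror of Type (1) accounts for the coefficient $4$ in $4\,a(n)$; Types (2) and (3) together contribute $\sum_{k=1}^n 4\,(k! - (k-1)!)\,(k\cdot S(n,k) - n\cdot S(n-1,k-1))$ after the inclusion–exclusion above. Summing yields the stated formula. The one genuinely delicate point, which I would isolate as a short lemma before the main computation, is the claim that every order enumerated by the previous proposition decomposes uniquely according to Types (1)–(3) of the ``second half'' list and that the overcounting from the divider-position and sign choices is corrected \emph{exactly} by the $(k-1)!$ and $n\cdot S(n-1,k-1)$ terms; once that combinatorial dictionary is nailed down, the rest is a routine manipulation of Stirling numbers and the identity $\sum_k k!\,S(n,k) = a(n)$.
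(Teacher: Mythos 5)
Your overall plan --- count the three listed forms separately and add --- is the same as the paper's, but the execution has two genuine gaps, and the places you flag as ``subtle bookkeeping'' are exactly where the proof has to be done rather than guessed at. First, the coefficient $4$ in $4\,a(n)$ does not come from any ``reflected copy'' or ``$\tfrac12$-at-the-top mirror of Type (1)'': the three forms are already the complete list of second halves, and no further symmetry is available. The correct accounting is: Type (1) contributes $2\,a(n)$; Type (2) in the sub-case where \emph{no} block lies above $\tfrac12$ contributes $2\bigl(a(n)-n\,a(n-1)\bigr)$ (ordered partitions of $[n]$ with first block of size $>1$, one free sign); Type (3) contributes $2n\,a(n-1)$ (choose the singleton, order the rest, one free sign since $B_1$ and $B_2$ are forced to agree). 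These three pieces sum to $4\,a(n)$. Your proposed mechanism would double-count, and your blanket claim that Types (2) and (3) carry a sign factor of $4$ is false in two of the three sub-cases: in Type (3) the signs of $B_1$ and $B_2$ are tied, and in Type (2) with $l=k$ there is no upper part, so each gives only a factor of $2$.

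Second, the main sum is produced entirely by the remaining sub-case of Type (2) (at least one block on each side of $\tfrac12$), and your proposed inclusion--exclusion does not derive it. The count there is: ordered partitions of $[n]$ into $k$ blocks with first block of size greater than $1$, which number $k!\,S(n,k)-n\,(k-1)!\,S(n-1,k-1)$ (subtracting those whose first block is a singleton --- this, not ``a singleton whose sign breaks alternation,'' is the source of the $n\,S(n-1,k-1)$ term); times $(k-1)$ choices of a gap strictly between consecutive blocks for $\tfrac12$; times $4$ for the two genuinely free signs (first block of the lower part and first block of the upper part). The identities $(k-1)\,k!=k\,(k!-(k-1)!)$ and $(k-1)\,(k-1)!=k!-(k-1)!$ then repackage $4(k-1)\bigl(k!\,S(n,k)-n(k-1)!\,S(n-1,k-1)\bigr)$ into the stated summand $4(k!-(k-1)!)\bigl(k\,S(n,k)-n\,S(n-1,k-1)\bigr)$. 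Your factor-of-$k$ heuristic (``$k$ gap positions minus the Type (1) position'') and the attribution of $(k!-(k-1)!)$ to a ``degenerate boundary case'' do not assemble into this computation. As written, the proposal identifies the right target expressions but does not prove the proposition.
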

\begin{proof}
We will count the number of orders of each of the above forms.
\begin{enumerate}
    \item In the first case, we just have to define an ordered partition of $[n]$ and assign alternating signs to them.
    The number of ways this can be done is
    \begin{equation*}
        \sum_{k=1}^{n}\sum\limits_{\substack{(a_1,\dots,a_k)\\a_1+\cdots+a_k=n}}2\cdot \dfrac{n!}{a_1!\cdots a_k!} = 2 \cdot a(n).
    \end{equation*}
    \item In the second case, we consider two sub-cases:
    \begin{enumerate}
        \item There is no block after $\frac{1}{2}$.
        In this case, we have to define an ordered partition of $[n]$ whose first part has size greater that $1$ and assign alternating signs to them.
        The number of ways this can be done is
        \begin{equation*}
            \sum_{k=1}^{n-1}\sum\limits_{\substack{(a_1,\dots,a_k)\\a_1+\cdots+a_k=n,\ a_1 \neq 1}}2\cdot \dfrac{n!}{a_1!\cdots a_k!} = 2 (a(n) - n \cdot a(n-1))
        \end{equation*}
        where the equality is because the number of ordered partitions of $[n]$ with first block having size $1$ is $n \cdot a(n-1)$.
        \item There is some block after $\frac{1}{2}$.
        In this case, we have to again define an ordered partition of $[n]$ whose first part has size greater that $1$.
        But we then have to choose a spot between two blocks to place $\frac{1}{2}$ and then choose a sign for the first block and the block after $\frac{1}{2}$.
        The number of ways this can be done is
        \begin{equation*}
            \sum_{k=1}^{n-1}\sum\limits_{\substack{(a_1,\dots,a_k)\\a_1+\cdots+a_k=n,\ a_1 \neq 1}}4 (k-1) \dfrac{n!}{a_1!\cdots a_k!}.
        \end{equation*}Making the following substitution for all $k \in [n-1]$
        \begin{align*}
            \sum\limits_{\substack{(a_1,\dots,a_k)\\a_1+\cdots+a_k=n,\ a_1 \neq 1}}\dfrac{n!}{a_1!\cdots a_k!} &= \sum\limits_{\substack{(a_1,\dots,a_k)\\a_1+\cdots+a_k=n}}\dfrac{n!}{a_1!\cdots a_k!}\hspace{0.2cm} - \sum\limits_{\substack{(1,a_2,\dots,a_k)\\1+a_2+\cdots+a_k=n}}\dfrac{n!}{1!a_2!\cdots a_k!}\\[0.1cm]
            &= k! \cdot S(n,k)-n \cdot (k-1)! \cdot S(n-1,k-1)
        \end{align*}we get that the initial expression is the same as
        \begin{equation*}
            \sum_{k=1}^{n} 4 (k!-(k-1)!)(k \cdot S(n,k)-n \cdot S(n-1,k-1)).
        \end{equation*}
    \end{enumerate}
    \item In the third case, we have to choose the element of $[n]$ in $B_1$ and then define an ordered partition of the remaining $(n-1)$ elements and assign alternating signs to them.
    Since we want $B_1$ and $B_2$ to have the same sign, we just need to assign a sign to $B_2$.
    So, the number of orders of the third type is
    \begin{equation*}
        n \cdot \sum_{k=1}^{n-1} \sum\limits_{\substack{(a_1,\dots,a_k)\\a_1+\cdots+a_k=n-1}}2\cdot \dfrac{(n-1)!}{a_1!\cdots a_k!} = n \cdot 2 \cdot a(n-1).
    \end{equation*}
\end{enumerate}Adding up the counts made for each form gives us the required result.
\end{proof}

So, from the observations made above, we have proved the following theorem:
\begin{theorem}
The number of regions of $\mathcal{BT}_n$ is
\begin{align*}
    4 \cdot a(n)+\sum_{k=1}^{n} 4 (k!-(k-1)!) (k \cdot S(n,k)-n \cdot S(n-1,k-1)).
\end{align*}
\end{theorem}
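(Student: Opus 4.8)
The plan is to obtain the region count directly from the bijective machinery built up in this section. By the bijection established above between regions of $\mathcal{BT}_n$ and the total orders (on the boxed threshold blocks together with $-\frac12$ and $\frac12$) of the three forms listed there, it suffices to enumerate those orders; the displayed formula is then exactly the content of \cref{count}. So the ``proof'' is really an assembly step, and the first thing I would do is make the reduction to \cref{count} explicit.

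The key reduction is the negation symmetry. By \cref{negord}, in each of the three forms the part of the order lying strictly before $-\frac12$, together with the position of $-\frac12$ among those blocks, is the mirror image under $B\mapsto -B$ of the part lying strictly after $\frac12$ together with the position of $\frac12$, and any block sitting between $-\frac12$ and $\frac12$ belongs to a collection that is itself closed under negation. Hence an admissible order is determined by its ``second half'', i.e.\ by an object of one of the three reduced types displayed just before \cref{count}, where $B_1,\dots,B_k$ is an ordered set partition of $[n]$ with a sign attached to each block and the signs alternate except at the prescribed places. I would then count these reduced objects form by form, as in the proof of \cref{count}: the first type contributes $2a(n)$; the third, after choosing the singleton $B_1$, contributes $2n\,a(n-1)$; and the second splits according to whether any block follows $\frac12$, the two sub-cases contributing $2(a(n)-n\,a(n-1))$ and $\sum_{k=1}^{n}4(k!-(k-1)!)(k\,S(n,k)-n\,S(n-1,k-1))$ respectively. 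Summing the four pieces, the $a(n-1)$ terms cancel and one is left with $4a(n)+\sum_{k=1}^{n}4(k!-(k-1)!)(k\,S(n,k)-n\,S(n-1,k-1))$, which is the theorem.

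Because the bijective apparatus (the equivalence relation of \cref{equi}, the total order of \cref{totord}, and the identification of regions with the three forms of orders) is already in place, the only genuinely computational ingredient is the ordered-set-partition bookkeeping inside \cref{count}, in particular the identity that $\sum \frac{n!}{a_1!\cdots a_k!}$, summed over compositions of $n$ into $k$ parts with $a_1\neq 1$, equals $k!\,S(n,k)-n\,(k-1)!\,S(n-1,k-1)$; everything else is routine. As an independent check I would also compute $r(\mathcal{BT}_n)=(-1)^n\chi_{\mathcal{BT}_n}(-1)=(-1)^n\chi_{\mathcal{T}_n}(-3)$ via Zaslavsky's theorem, \cref{boxth corollary}, and the explicit formula for $\chi_{\mathcal{T}_n}$, and verify agreement with the values $12, 64, 436, \ldots$ in \cref{tab:my_label} and with \seqnum{A341769}. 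The main obstacle, were the theorem proved from scratch, would lie not in this final assembly but in the earlier case analysis: showing that the block order is well defined and that the three listed forms are exhaustive and mutually exclusive — above all the delicate situation in which exactly one $i\in[n]$ satisfies $-\frac12\prec_R i\prec_R\frac12$, which forces the ad hoc tie-breaking convention and the third form.
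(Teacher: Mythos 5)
Your proposal is correct and matches the paper exactly: the theorem is stated with no separate proof precisely because it is the assembly of the bijection between regions and the three forms of total orders with the enumeration in \cref{count}, and your form-by-form tally ($2a(n)$, $2(a(n)-n\,a(n-1))$ plus the Stirling sum, and $2n\,a(n-1)$, with the $a(n-1)$ terms cancelling) reproduces the paper's computation. Nothing further is needed.
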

Similar arguments can be applied to the threshold arrangement $\mathcal{T}_n$.
\begin{proposition}\label{proposition:thresholdregions}
The regions of $\mathcal{T}_n$ are in bijection with ordered partitions of $[-n,n] \setminus \{0\}$ of the form
\begin{equation*}
    -B_k < \cdots < -B_2 < -B_1 < B_1 < B_2 < \cdots < B_k
\end{equation*}where all elements of each block have the same sign, the size of $B_1$ is greater than $1$ and $B_i$ and $B_{i+1}$ are of opposite signs for all $i \in [k-1]$.
\end{proposition}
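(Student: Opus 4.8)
The plan is to transcribe, essentially verbatim, the machinery developed in this section to $\mathcal{T}_n$, deleting the elements $\pm\tfrac12$ everywhere. For a region $R$ of $\mathcal{T}_n$, define $\prec_R$ on $N=[-n,n]\setminus\{0\}$ (with $x_{-i}:=-x_i$) by $i\prec_R -j$ if $x_i+x_j<0$ in $R$, for $i\ne j$ in $[n]$; the comparable pairs are exactly the elements of $N$ of opposite sign and distinct absolute value. Define $\sim$ on $N$ by $a\sim b$ iff $a,b$ have the same sign and no $c\in N$ satisfies $a\prec_R c\prec_R b$ or $b\prec_R c\prec_R a$: the proof of \cref{equi} never used $\pm\tfrac12$ in an essential way, so $\sim$ is an equivalence, its classes (the \emph{blocks}) are well defined, and $-B$ is a block whenever $B$ is. Order the blocks by declaring $B<B'$ if there is a chain $c_0\prec_R c_1\prec_R\cdots\prec_R c_m$ with $c_0\in B$ and $c_m\in B'$; the argument of \cref{totord} that $B<B$ is impossible, and the negation rule $B<B'\Rightarrow -B'<-B$ of \cref{negord}, both carry over unchanged.

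The heart of the proof is that, \emph{unlike} for $\mathcal{BT}_n$, this order is \emph{always} a total order: there are no bounding hyperplanes $x_i=\pm\tfrac12$ to trap a coordinate strictly between them, so the exceptional case of \cref{totord} cannot occur. Blocks of the same sign are comparable by the definition of $\sim$, and opposite-sign blocks not of the form $\{i\},\{-i\}$ are comparable because they contain elements of distinct absolute value. For a pair of antipodal singleton blocks $\{i\},\{-i\}$ one uses the observation: \emph{if $\{i\}$ is a singleton block of $R$, then $x_i$ has constant sign on $R$}. Indeed, $\{i\}$ being a singleton means that for each $j\ne i$ there is some $l\notin\{i,j\}$ with $-x_l$ strictly between $x_i$ and $x_j$; if some point of $R$ had $x_i=0$, then picking $m\ne i$ with $|x_m|>0$ minimal among the remaining coordinates at that point would force an $l\ne i$ with $0<|x_l|<|x_m|$, a contradiction (the degenerate case in which all coordinates other than $x_i$ vanish there is excluded directly, since then $i$ cannot be separated from any $j$). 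With $x_i$ of constant sign one produces a chain witnessing $\{-i\}<\{i\}$ or $\{i\}<\{-i\}$, so the order is total. Totality then forces the single form in the statement: the first half of the order is the negation of the second by \cref{negord} (the involution $B\mapsto -B$ is fixed-point-free and order-reversing); consecutive blocks have opposite signs, since two consecutive blocks of the same sign would be merged by $\sim$; and the central block $B_1$ has size greater than $1$ by exactly the argument used for the corresponding case of $\mathcal{BT}_n$ (if $B_1=\{a\}$ then an element of $B_2$ would have to be separated from $-a$ by an element lying in a block strictly between $-B_1$ and $B_1$, which is impossible).

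Finally one checks that $R\mapsto(\text{its order})$ is a bijection onto the set of orders of the stated form. From such an order the region is recovered exactly as in the $\mathcal{BT}_n$ case: for $i\ne j$ in $[n]$, $x_i+x_j>0$ iff the block containing $-j$ precedes the block containing $i$. Conversely, reading off the positions $1<\cdots<2k$ of the blocks in the order, assigning to position $t$ a value $v_t$ with $v_1<\cdots<v_{2k}$ and $v_t=-v_{2k+1-t}$, and setting $x_a=v_t$ for $a$ in the $t$-th block, gives a point of $\R^n$ lying in $\mathcal{T}_n$'s complement and realizing precisely those sign conditions; distinct orders give distinct relations $\prec_R$, hence distinct regions. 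The main obstacle is the total-order claim above — in particular, turning ``$x_i$ has constant sign'' into the explicit comparison $\{-i\}<\{i\}$ (or its reverse) requires a careful chain construction — while everything else is a direct transcription of the arguments already developed for $\mathcal{BT}_n$.
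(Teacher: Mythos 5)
Your overall strategy is the same as the paper's --- indeed the paper's entire justification for this proposition is the sentence ``Similar arguments can be applied to the threshold arrangement $\mathcal{T}_n$'' --- and you correctly isolate the one step that does not transcribe verbatim: in the proof of \cref{totord} the antipodal singleton blocks $\{i\},\{-i\}$ are compared, when $i\succ_R\frac{1}{2}$ or $i\prec_R-\frac{1}{2}$, by using $\pm\frac{1}{2}$ as intermediate points, and for $\mathcal{T}_n$ that device is unavailable. Your first reduction is fine: the argument that a singleton block $\{i\}$ forces $x_i\neq 0$, hence $x_i$ of constant sign, on $R$ is correct. But the decisive assertion --- ``with $x_i$ of constant sign one produces a chain witnessing $\{-i\}<\{i\}$ or $\{i\}<\{-i\}$'' --- is precisely the new content of this proposition relative to the $\mathcal{BT}_n$ case, and you do not produce the chain; you defer it as ``a careful chain construction''. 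As written this is a genuine gap, located exactly where the transcription genuinely breaks down.

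The claim is true and can be closed as follows. Suppose $x_i>0$ on $R$ and set $A=\{j\neq i : x_i+x_j>0 \text{ on } R\}$. If every pair $j\neq l$ in $A$ had $x_j+x_l>0$ on $R$, then for any $q\in R$, small $\epsilon>0$ and large $C>0$, the point with $i$-th coordinate $-\epsilon$, $j$-th coordinate $q_j+C$ for $j\in A$, and $q_j-C$ for $j\notin A\cup\{i\}$ would satisfy every defining inequality of $R$ (for indices outside $A\cup\{i\}$ one uses that $x_j<-x_i<0$ on $R$, so pairs among them are forced negative; for mixed pairs the shifts by $C$ cancel; pairs inside $A$ are positive by assumption), contradicting $x_i>0$ on $R$. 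Hence some $j\neq l$ in $A$ has $x_j+x_l<0$ on $R$, and $-i\prec_R j\prec_R -l\prec_R i$ is the required chain, giving $\{-i\}<\{i\}$. (Alternatively, one can work at a point of $R$ with distinct nonzero $|x_m|$ and show directly that $\{i\}$ being a singleton block forces such a pair $j,l$ to exist, bypassing the constant-sign statement altogether.) With this step supplied, the remainder of your transcription --- the equivalence relation, comparability of all other pairs of blocks, the sign alternation, the bound $|B_1|>1$, and the two directions of the bijection --- is correct and consistent with the paper's intended argument.
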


The bijection in Proposition \ref{proposition:thresholdregions} is defined just as was done for $\mathcal{BT}_n$.
That is, the region associated to such an order satisfies $x_i+x_j > 0$ if and only if $-j$ appears before $i$ in the order.
Again, such orders are completely specified by their second half, which are ordered partition of $[n]$ with first block size greater than $1$ and a sign assigned to the first block (and alternate signs for consecutive blocks).
So, we get the following theorem:
\begin{theorem}\label{theorem:counting of threshold regions}
The number of regions of $\mathcal{T}_n$ is
\begin{equation*}
    2(a(n)-n \cdot a(n-1)).
\end{equation*}
\end{theorem}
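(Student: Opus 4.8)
The plan is to derive the formula directly from \Cref{proposition:thresholdregions}, exactly paralleling the (easier) portions of the proof of \Cref{count}. By that proposition, regions of $\mathcal{T}_n$ correspond bijectively to ordered partitions of $[-n,n]\setminus\{0\}$ of the displayed ``antisymmetric'' form, and since the first half of such an order is forced to be the mirror image of the second half, a region is completely determined by the data of the second half: an ordered set partition $(B_1,\dots,B_k)$ of $[n]$ with $|B_1| > 1$, together with a choice of sign for $B_1$ (the remaining signs then alternate). So the count reduces to counting pairs (ordered set partition of $[n]$ with first block of size at least $2$, sign in $\{+,-\}$).

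First I would fix $k$ and sum over compositions: the number of ordered set partitions $(B_1,\dots,B_k)$ of $[n]$ with $|B_i| = a_i$ is the multinomial coefficient $n!/(a_1!\cdots a_k!)$, and summing over all compositions $a_1+\cdots+a_k = n$ with $k$ ranging over $[n]$ gives the ordered Bell number $a(n)$; this is the standard identity $a(n) = \sum_{k\ge 1} k!\, S(n,k)$. Next I would subtract off those ordered set partitions whose first block is a singleton: deleting that singleton gives a bijection with (choice of the singleton element, ordered set partition of the remaining $n-1$ elements), so there are exactly $n\cdot a(n-1)$ of them. Hence the number of ordered set partitions of $[n]$ with $|B_1|>1$ is $a(n) - n\cdot a(n-1)$. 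Multiplying by $2$ for the sign of $B_1$ yields $2(a(n) - n\cdot a(n-1))$, which is the claimed count; this matches sub-case (2)(a) in the proof of \Cref{count}, as it should, since that sub-case is literally the same enumeration problem.

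There is essentially no hard step here: everything is bookkeeping with multinomial coefficients and the two classical facts $a(n) = \sum_k k!\,S(n,k)$ and (number of ordered set partitions of $[n]$ with singleton first block) $= n\cdot a(n-1)$. The only point requiring a sentence of care is the reduction ``region $\leftrightarrow$ second half of the order $\leftrightarrow$ (ordered set partition with $|B_1|>1$, sign of $B_1$)'': one must note that the antisymmetry of the order means no information is lost or double-counted in passing to the second half, that the sign of $B_1$ can be chosen freely while the signs of $B_2,\dots,B_k$ are then determined by the alternation condition $B_i,B_{i+1}$ of opposite signs, and that the constraint $|B_1|>1$ is exactly what \Cref{proposition:thresholdregions} imposes. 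Given the discussion already present in the paragraph preceding the theorem statement, I would simply invoke \Cref{proposition:thresholdregions} and carry out the two-line count.

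\begin{proof}
By \Cref{proposition:thresholdregions}, the regions of $\mathcal{T}_n$ are in bijection with ordered partitions of $[-n,n]\setminus\{0\}$ of the stated antisymmetric form. Since the first half of such an order is the mirror image of the second half, such an order is determined by its second half: an ordered set partition $(B_1,\dots,B_k)$ of $[n]$ with $|B_1| > 1$, together with a sign for $B_1$ (the signs of $B_2,\dots,B_k$ being then forced by the condition that consecutive blocks have opposite signs). Hence the number of regions of $\mathcal{T}_n$ equals $2$ times the number of ordered set partitions of $[n]$ whose first block has size greater than $1$.

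The number of all ordered set partitions of $[n]$ is the ordered Bell number $a(n) = \sum_{k=1}^n k!\, S(n,k)$, since there are $k!\, S(n,k)$ ordered set partitions into exactly $k$ blocks. Among these, the ordered set partitions whose first block is a singleton are in bijection with pairs consisting of an element of $[n]$ (the singleton) and an ordered set partition of the remaining $n-1$ elements; there are $n \cdot a(n-1)$ of these. Therefore the number of ordered set partitions of $[n]$ with first block of size greater than $1$ is $a(n) - n\cdot a(n-1)$, and the number of regions of $\mathcal{T}_n$ is $2\bigl(a(n) - n\cdot a(n-1)\bigr)$.
\end{proof}
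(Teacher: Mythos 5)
Your proposal is correct and follows essentially the same route as the paper: the paper also deduces the theorem from \cref{proposition:thresholdregions} by passing to the second half of the order (an ordered set partition of $[n]$ with first block of size greater than $1$, plus a sign for the first block) and counting these as $2(a(n)-n\cdot a(n-1))$, exactly as in sub-case (2)(a) of the proof of \cref{count}. Your write-up just makes explicit the two bookkeeping facts the paper leaves implicit.
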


\begin{remark}
It can be shown that the orders considered in \cref{proposition:thresholdregions} are in bijection with the set of threshold pairs (of size $n$) in standard form\footnote{A pair $(\pi, \omega)$ is a \emph{threshold pair} (of size $n$) if $\pi$ is a permutation of size $n$ and $\omega$ is a word in $\{+1,-1\}^n$. A threshold pair $(\pi, \omega)$ of size $n \geq 2$ is in \emph{standard form} if $\omega_1 = \omega_2$ and if $\omega_i = \omega_{i+1}$ implies $\pi_i < \pi_{i+1}$ for all $1 \leq i < n$.} considered by Spiro \cite{spiro20}.
In fact, he showed that the threshold pairs are in bijection with the labeled threshold graphs. 
\end{remark}

The known formula for the number of labeled threshold graphs is in terms of Eulerian numbers. 
Hence for the sake of completeness, we now show that the formula in \Cref{theorem:counting of threshold regions} is the same as the one containing Eulerian numbers. 
We first recall a few identities related to  Eulerian numbers $A(n,k)$ and the ordered Bell number $a(n)$.
A quick reference for these identities is B\'ona's book \cite[Section 1.1]{bonabook}.

\begin{itemize}
\item $a(n)=\sum\limits_{k=0}^{n-1} 2^k \cdot A(n,k)$
\item $A(n,0)=1$, $A(n,n-1)=1$ and $A(n,k)=0$ for all $k \geq n$.
\item $A(n,k)=(n-k) A(n-1,k-1)+(k+1) A(n-1,k)$ for $k\geq 1$.
\end{itemize}

\begin{proposition}
We have the following equality
\[r(\mathcal{T}_n) = \sum\limits_{k=1}^{n-1} 2^k (n-k) A(n-1,k-1).\]
\end{proposition}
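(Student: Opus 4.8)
The plan is to start from the closed form $r(\mathcal{T}_n) = 2\bigl(a(n) - n\,a(n-1)\bigr)$ obtained in \Cref{theorem:counting of threshold regions} and convert it into the stated Eulerian-number expression using only the three identities recalled just above. Everything reduces to routine manipulation of these identities, so the work is bookkeeping rather than insight.

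First I would expand both ordered Bell numbers via $a(m) = \sum_{k=0}^{m-1} 2^k A(m,k)$, applied with $m = n$ and $m = n-1$; this rephrases the claim as an identity purely among the numbers $A(n,\cdot)$ and $A(n-1,\cdot)$. Next I would eliminate the $A(n,k)$ using the recursion $A(n,k) = (n-k)A(n-1,k-1) + (k+1)A(n-1,k)$: substituting it into $\sum_{k=0}^{n-1} 2^k A(n,k)$ and reindexing the first resulting sum by $j = k-1$ (using $A(n-1,-1) = 0 = A(n-1,n-1)$ so that both sums run over $0 \le k \le n-2$ after collecting) yields the intermediate identity
\[
a(n) = \sum_{k=0}^{n-2} 2^k\,(2n-1-k)\,A(n-1,k),
\]
which is the crux of the computation and can be checked quickly at $n = 2, 3$. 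Finally, plugging this into $r(\mathcal{T}_n) = 2\bigl(a(n) - n\,a(n-1)\bigr)$ and merging the two sums over the common range $0 \le k \le n-2$ gives
\[
r(\mathcal{T}_n) = 2\sum_{k=0}^{n-2} 2^k\bigl((2n-1-k) - n\bigr)A(n-1,k) = 2\sum_{k=0}^{n-2} 2^k\,(n-1-k)\,A(n-1,k),
\]
and the substitution $k \mapsto k-1$ turns the right-hand side into $\sum_{k=1}^{n-1} 2^k\,(n-k)\,A(n-1,k-1)$, as required.

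There is no substantial obstacle here; the only points that demand care are the boundary terms where Eulerian numbers vanish (so the summation ranges align correctly when the recursion is applied) and keeping track of the powers of $2$ during the two reindexings. An alternative, slightly slicker route avoids isolating the intermediate identity: reindex the target sum first as $2\sum_{j=0}^{n-2} 2^j (n-1-j) A(n-1,j)$, and then show directly that $2\bigl(a(n) - n\,a(n-1)\bigr)$ collapses to the same expression after substituting the Eulerian recursion into $a(n)$ and combining with the expansion of $n\,a(n-1)$.
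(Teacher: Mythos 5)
Your proof is correct and follows essentially the same route as the paper: both start from $r(\mathcal{T}_n)=2(a(n)-n\,a(n-1))$, expand the ordered Bell numbers via $a(m)=\sum_k 2^k A(m,k)$, and apply the Eulerian recursion to reduce everything to sums in $A(n-1,\cdot)$. The only difference is organizational — you isolate the intermediate identity $a(n)=\sum_{k=0}^{n-2}2^k(2n-1-k)A(n-1,k)$ before combining, while the paper sets aside a copy of the target sum and shows the remainder equals minus half of it — and your intermediate identity checks out.
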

\begin{proof} 
\noindent From \cref{theorem:counting of threshold regions}, we have  
\begin{equation*}
\begin{split}
r(\mathcal{T}_n) &= 2(a(n)-n \cdot a(n-1))\\
& = 2 \cdot \Bigg(\sum\limits_{k=0}^{n-1} 2^k \cdot A(n,k)- n \cdot \Big( \sum\limits_{k=0}^{n-2} 2^k \cdot A(n-1,k)\Big)\Bigg)\\
 \frac{r(\mathcal{T}_n)}{2} & = A(n,0)+\sum\limits_{k=1}^{n-1} 2^k \big((n-k) A(n-1,k-1)+(k+1) A(n-1,k)\big)\\
& \hspace*{0.5cm}- n \Big( \sum\limits_{k=0}^{n-2} 2^k \cdot A(n-1,k)\Big) \\
& = \sum\limits_{k=1}^{n-1} 2^k (n-k) A(n-1,k-1) + \Big(A(n,0)+\sum\limits_{k=1}^{n-1} 2^k (k+1) A(n-1,k)\Big) \\
& \hspace*{0.5cm}- \sum\limits_{k=0}^{n-2} n \cdot 2^k \cdot A(n-1,k)
\end{split}
\end{equation*}
Since $A(n-1,n-1)=0$,
\begin{equation*}
\begin{split}
\frac{r(\mathcal{T}_n)}{2} & = \sum\limits_{k=1}^{n-1} 2^k (n-k) A(n-1,k-1) + \sum\limits_{k=0}^{n-2} 2^k (k+1) A(n-1,k)- \sum\limits_{k=0}^{n-2} n \cdot 2^k \cdot A(n-1,k)\\
&= \sum\limits_{k=1}^{n-1} 2^k (n-k) A(n-1,k-1) - \sum\limits_{k=0}^{n-2} (n-k-1) 2^k \cdot A(n-1,k)
\end{split}
\end{equation*}
Replacing $k+1$ by $t$ in the second sum, we get
\begin{equation*}
\begin{split}
r(\mathcal{T}_n) & = 2 \cdot \Bigg(\sum\limits_{k=1}^{n-1} 2^k (n-k) A(n-1,k-1) - \sum\limits_{t=1}^{n-1} 2^{t-1} (n-t) A(n-1,t-1)\Bigg)\\
& = \sum\limits_{k=1}^{n-1} 2^k (n-k) A(n-1,k-1).
\end{split}
\end{equation*}
\end{proof}

\section{The colored threshold graphs}\label{section4}

Before defining the colored threshold graphs that are in bijection with the regions of the boxed threshold arrangement, we recall the bijection between regions of the threshold arrangement and labeled threshold graphs.

\begin{definition}
A \textit{threshold graph} is defined recursively as follows:
\begin{enumerate}
    \item The empty graph is a threshold graph.
    \item A graph obtained by adding an isolated vertex to a threshold graph is a threshold graph.
    \item A graph obtained by adding a vertex adjacent to all vertices of a threshold graph is a threshold graph.
\end{enumerate}
\end{definition}

\begin{definition}
A \textit{labeled threshold graph} is a threshold graph having $n$ vertices with vertices labeled distinctly using $[n]$.
\end{definition}

Such labeled threshold graphs can be specified by a signed permutation of $[n]$, that is, a permutation of $[n]$ with a sign associated to each number.
The signed permutation $i_1\ i_2\ \cdots\ i_n$ would correspond to the labeled threshold graph obtained by adding vertices labeled $|i_1|,|i_2|,\dots,|i_n|$ in order where a positive $i_k$ means that $|i_k|$ is added adjacent to all previous vertices and a negative $i_k$ means that it is added isolated to the previous vertices.
A maximal string of positive numbers or negative numbers in a signed permutation will be called a block.

\begin{example}
The labeled threshold graph associated to the signed permutation on $[5]$ given by $\overset{-}{2}\overset{-}{3}\overset{+}{1}\overset{+}{4}\overset{-}{5}$ is shown in Figure \ref{tgconst}.
\begin{figure}[H]
    \centering
    \begin{tikzpicture}[scale=0.5]
    \node (2) [circle,draw=black,inner sep=2pt] at (0,0) {\small $2$};
    \node (4) at (0.75,-2) {\small \color{white} 4};
    \node at (1.5,0.5) {$\xrightarrow{\overset{-}{3}}$};
    \end{tikzpicture}
    \begin{tikzpicture}[scale=0.5]
        \node (2) [circle,draw=black,inner sep=2pt] at (0,0) {\small $2$};
        \node (3) [circle,draw=black,inner sep=2pt] at (2,0) {\small $3$};
        \node (4) at (0.75,-2) {\small \color{white} 4};
        \node at (3.5,0.5) {$\xrightarrow{\overset{+}{1}}$};
    \end{tikzpicture}
    \begin{tikzpicture}[scale=0.5]
        \node (2) [circle,draw=black,inner sep=2pt] at (0,0) {\small $2$};
        \node (3) [circle,draw=black,inner sep=2pt] at (2,0) {\small $3$};
        \node (1) [circle,draw=black,inner sep=2pt] at (1,2) {\small $1$};
        \node (4) at (0.75,-2) {\small \color{white} 4};
        \node at (3.5,0.5) {$\xrightarrow{\overset{+}{4}}$};
        \draw (3)--(1)--(2);
    \end{tikzpicture}
    \begin{tikzpicture}[scale=0.5]
        \node (2) [circle,draw=black,inner sep=2pt] at (0,0) {\small $2$};
        \node (3) [circle,draw=black,inner sep=2pt] at (2,0) {\small $3$};
        \node (1) [circle,draw=black,inner sep=2pt] at (1,2) {\small $1$};
        \draw (3)--(1)--(2);
        \node at (3.5,0.5) {$\xrightarrow{\overset{-}{5}}$};
        \node (4) [circle,draw=black,inner sep=2pt] at (1,-2) {\small $4$};
        \draw (1)--(4);
        \draw (2)--(4);
        \draw (3)--(4);
    \end{tikzpicture}
    \begin{tikzpicture}[scale=0.5]
        \node (2) [circle,draw=black,inner sep=2pt] at (0,0) {\small $2$};
        \node (3) [circle,draw=black,inner sep=2pt] at (2,0) {\small $3$};
        \node (1) [circle,draw=black,inner sep=2pt] at (1,2) {\small $1$};
        \draw (3)--(1)--(2);
        \node (4) [circle,draw=black,inner sep=2pt] at (1,-2) {\small $4$};
        \node (5) [circle,draw=black,inner sep=2pt] at (3.5,0) {\small $5$};
        \draw (1)--(4);
        \draw (2)--(4);
        \draw (3)--(4);
    \end{tikzpicture}
    \caption{Construction of threshold graph corresponding to $\overset{-}{2}\overset{-}{3}\overset{+}{1}\overset{+}{4}\overset{-}{5}$.}
    \label{tgconst}
\end{figure}
\end{example}

The following facts can be verified:
\begin{enumerate}
    \item The sign of the first number in the permutation does not matter and hence we can make the first block have size greater than 1.
    \item Elements in the same block can be reordered.
\end{enumerate}
Hence, labeled threshold graphs can be specified by an ordered partition of $[n]$ with first block size greater than 1 and alternating signs assigned to the blocks.
In fact, this association is a bijection.

Given a threshold graph $G_1$, we can obtain this alternating signed ordered partition of $[n]$ as follows:
Since $G_1$ is a threshold graph, it has at least one isolated vertex or at least one vertex that is adjacent to all other vertices.
If it has an isolated vertex, set $D_1$ to be the set of all isolated vertices, assign it a negative sign and set $G_2$ to be the graph obtained by deleting all the vertices of $D_1$ from $G_1$.
If $G_1$ has at least one vertex adjacent to all other vertices, set $D_1$ to be the set of all such vertices, assign it a positive sign and set $G_2$ to be the graph obtained by deleting all the vertices of $D_1$ from $G_1$.
We repeat this process until we obtain a graph $G_k$ which is complete, in which case we set $D_k$ to be all vertices of $G_k$ and assign it a positive sign, or $G_k$ has no edges, in which case we set $D_k$ to be all vertices of $G_k$ and assign it a negative sign.
Then set $B_i=D_{k-i+1}$ and assign it the same sign as $D_{k-i+1}$ for all $i \in [k]$.
The signed ordered partition $B_1 , \dots, B_k$ is the one associated to $G_1$.

\begin{example} Figure \ref{tgdeconst} shows an example of obtaining the signed blocks from a threshold graph. The corresponding signed ordered partition for this example is $\overset{-}{\{2,3\}}\overset{+}{\{1,4\}}\overset{-}{\{5\}}$.
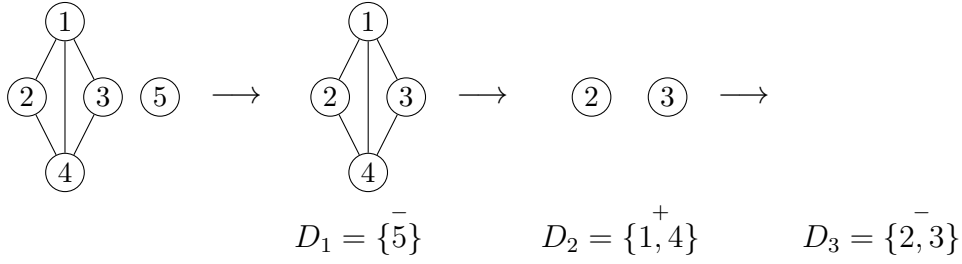
\begin{figure}[H]
    \centering
    \begin{tikzpicture}[scale=0.5]
        \node (2) [circle,draw=black,inner sep=2pt] at (0,0) {\small $2$};
        \node (3) [circle,draw=black,inner sep=2pt] at (2,0) {\small $3$};
        \node (1) [circle,draw=black,inner sep=2pt] at (1,2) {\small $1$};
        \draw (3)--(1)--(2);
        \node (4) [circle,draw=black,inner sep=2pt] at (1,-2) {\small $4$};
        \node (5) [circle,draw=black,inner sep=2pt] at (3.5,0) {\small $5$};
        \node at (5.5,0) {$\longrightarrow$};
        \draw (1)--(4);
        \draw (2)--(4);
        \draw (3)--(4);
        \node at (0.75,-3.5) {\color{white}$D_2=\overset{+}{\{1,4\}}$};
    \end{tikzpicture}
    \begin{tikzpicture}[scale=0.5]
        \node (2) [circle,draw=black,inner sep=2pt] at (0,0) {\small $2$};
        \node (3) [circle,draw=black,inner sep=2pt] at (2,0) {\small $3$};
        \node (1) [circle,draw=black,inner sep=2pt] at (1,2) {\small $1$};
        \draw (3)--(1)--(2);
        \node at (4,0) {$\longrightarrow$};
        \node (4) [circle,draw=black,inner sep=2pt] at (1,-2) {\small $4$};
        \draw (1)--(4);
        \draw (2)--(4);
        \draw (3)--(4);
        \node at (0.75,-3.5) {$D_1=\overset{-}{\{5\}}$};
    \end{tikzpicture}
    \begin{tikzpicture}[scale=0.5]
        \node (2) [circle,draw=black,inner sep=2pt] at (0,0) {\small $2$};
        \node (3) [circle,draw=black,inner sep=2pt] at (2,0) {\small $3$};
        \node (4) at (0.75,-2) {\small \color{white} 4};
        \node at (4,0) {$\longrightarrow$};
        \node at (0.75,-3.5) {$D_2=\overset{+}{\{1,4\}}$};
    \end{tikzpicture}
    \begin{tikzpicture}[scale=0.5]
        \node at (0.75,-3.5) {$D_3=\overset{-}{\{2,3\}}$};
    \end{tikzpicture}
    \caption{Obtaining blocks from a threshold graph.}
    \label{tgdeconst}
\end{figure}
\end{example}

Hence, regions of $\mathcal{T}_n$ and labeled threshold graphs on $n$ vertices are both in bijection with ordered partitions of $[n]$ with first block size greater than 1 and alternating signs assigned to the blocks.
Hence we obtain a bijection between regions of $\mathcal{T}_n$ and labeled threshold graphs on $n$ vertices.
By combining the definitions of the two bijections we see that to a labeled threshold graph on $n$ vertices we assign the region where $x_i+x_j > 0$ if and only if there is an edge between $i$ and $j$.

This can be proved as follows: If $-B_k < \cdots < -B_1 < B_1 < \cdots <B_k$ is the threshold block order corresponding to some region $R$ of $\mathcal{T}_n$, $-j \prec_R i$ for some $i \neq j$ in $[n]$ if and only if one of the following holds:
\begin{enumerate}
    \item $-j$ and $i$ both appear in $B_1, \dots, B_k$ with $-j$ appearing first.
    \item $-j$ appears in $-B_k,\dots,-B_1$ and $i$ appears in $B_1, \dots, B_k$.
    \item $-j$ and $i$ both appear in $-B_k, \dots, -B_1$ with $-j$ appearing first.
\end{enumerate}
Equivalently, one of the following holds:
\begin{enumerate}
    \item $-j$ and $i$ both appear in $B_1, \dots, B_k$ with $-j$ appearing first.
    \item $i$ and $j$ both appear in $B_1,\dots,B_k$.
    \item $-i$ and $j$ both appear in $B_1,\dots,B_k$ with $-i$ appearing first.
\end{enumerate}
But this is precisely the condition for there to be an edge between $i$ and $j$ in the threshold graph corresponding to $B_1<\cdots<B_k$.

We now move on to the boxed threshold arrangement.
\begin{definition}
A \textit{colored threshold graph} is defined recursively as follows:
\begin{enumerate}
    \item The empty graph is a colored threshold graph.
    \item A graph obtained by adding an isolated vertex to a colored threshold graph is a colored threshold graph. If there are colored vertices in the initial colored threshold graph, the new vertex should be colored red. If not, the new vertex can be left uncolored or colored red.
    \item A graph obtained by adding a vertex adjacent to all vertices of a colored threshold graph is a colored threshold graph. 
    If there are colored vertices in the initial colored threshold graph, the new vertex should be colored blue. 
    If not, the new vertex can be left uncolored or colored blue.
\end{enumerate}
\end{definition}

\begin{definition}
A \textit{labeled colored threshold graph} is a colored threshold graph with $n$ vertices with the vertices labeled distinctly with elements of $[n]$.
\end{definition}

Just as for threshold graphs, labeled colored threshold graphs can be represented as a signed permutation.
However, we also have to specify if and when the coloring of the vertices starts.
This is done by using the symbol $\frac{1}{2}$.
Having $\frac{1}{2}$ at the end of the signed permutation means that none of the vertices should be colored.

\begin{example}
The sequence $\overset{+}{2} \frac{1}{2}\overset{+}{1}\overset{+}{3}\overset{-}{4}\overset{-}{5}$ corresponds to the graph shown in Figure \ref{ctgconst}.
\begin{figure}[H]
    \centering
    \begin{tikzpicture}[scale=0.75]
        \node (2) [circle,draw=black,inner sep=2pt] at (0,0) {\small $2$};
        \node (3) [circle,draw=black,fill=blue!10,inner sep=2pt] at (1.5,0) {\small $1$};
        \node (1) [circle,draw=black,fill=blue!10,inner sep=2pt] at (0.75,1.5) {\small $3$};
        \draw (3)--(1)--(2)--(3);
        \node (4) [circle,draw=black,fill=red!10,inner sep=2pt] at (3,0.75) {\small $4$};
        \node (5) [circle,draw=black,fill=red!10,inner sep=2pt] at (4.5,0.75) {\small $5$};
    \end{tikzpicture}
    \caption{Labeled colored threshold graph corresponding to $\overset{+}{2} \frac{1}{2}\overset{+}{1}\overset{+}{3}\overset{-}{4}\overset{-}{5}$.}
    \label{ctgconst}
\end{figure}
\end{example}

Using similar observations about these sequences associated to labeled colored threshold graphs as done for labeled threshold graphs, we get that labeled colored threshold graphs are in bijection with orders of the forms counted in \Cref{count}.
Since these orders also correspond to region of $\mathcal{BT}_n$, we get a bijection between labeled colored threshold graphs with $n$ vertices and regions of $\mathcal{BT}_n$.
Just as before, the inequalities describing the region associated to a colored threshold graph are as follows:
$x_i + x_j > 0$ if and only if there is an edge between $i$ and $j$, $-\frac{1}{2} < x_i < \frac{1}{2}$ if $i$ is not colored, $x_i > \frac{1}{2}$ if $i$ is colored blue and $x_i<-\frac{1}{2}$ if $i$ is colored red.
Notice that the underlying labeled threshold graph corresponds to the $\mathcal{T}_n$ region that the $\mathcal{BT}_n$ region lies in.

Also, we can see that the bounded regions of $\mathcal{BT}_n$ are in bijection with the regions of $\mathcal{T}_n$.
Both are represented by labeled threshold graphs with $n$ vertices.
The bounded region of $\mathcal{BT}_n$ corresponding to a region of $\mathcal{T}_n$ is the one satisfying the same inequalities between $x_i+x_j$ and $0$ for all $i\neq j$ in $[n]$ and having $-\frac{1}{2} < x_i < \frac{1}{2}$ for all $i \in [n]$.

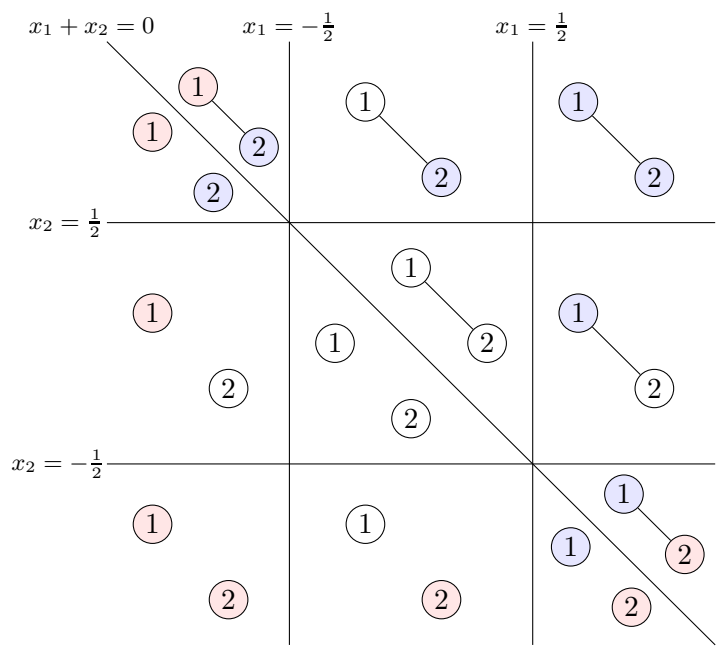
\begin{figure}[H]
    \centering
    \begin{tikzpicture}[scale=0.4]
    \draw (-4,10.5) node {\scriptsize $x_1=-\frac{1}{2}$};
    \draw (4,10.5) node {\scriptsize $x_1=\frac{1}{2}$};
    \draw (-11.6,-4) node {\scriptsize $x_2=-\frac{1}{2}$};
    \draw (-11.35,4) node {\scriptsize $x_2=\frac{1}{2}$};
    \draw (-10.5,10.5) node {\scriptsize $x_1+x_2=0$};
        \draw (-4,10)--(-4,-10);
        \draw (4,10)--(4,-10);
        \draw (-10,4)--(10,4);
        \draw (-10,-4)--(10,-4);
        \draw (-10,10)--(10,-10);
        \node (1) [circle,draw=black,inner sep=2pt] at (0,2.5) {\small 1};
        \node (2) [circle,draw=black,inner sep=2pt] at (2.5,0) {\small 2};
        \draw (1)--(2);
        \node  [circle,draw=black,inner sep=2pt] at (0,-2.5) {\small 2};
        \node  [circle,draw=black,inner sep=2pt] at (-2.5,0) {\small 1};
        
        \node  [circle,draw=black,fill=blue!10,inner sep=2pt] at (-6.5,5) {\small 2};
        \node  [circle,draw=black,fill=red!10,inner sep=2pt] at (-8.5,7) {\small 1};
        
        \node  [circle,draw=black,inner sep=2pt] at (-6,-1.5) {\small 2};
        \node  [circle,draw=black,fill=red!10,inner sep=2pt] at (-8.5,1) {\small 1};
        
        \node  [circle,draw=black,fill=red!10,inner sep=2pt] at (-6,-8.5) {\small 2};
        \node  [circle,draw=black,fill=red!10,inner sep=2pt] at (-8.5,-6) {\small 1};
        
        \node  [circle,draw=black,fill=red!10,inner sep=2pt] at (1,-8.5) {\small 2};
        \node  [circle,draw=black,inner sep=2pt] at (-1.5,-6) {\small 1};
        
        \node  [circle,draw=black,fill=red!10,inner sep=2pt] at (-6.5+13.75,-7-1.75) {\small 2};
        \node  [circle,draw=black,fill=blue!10,inner sep=2pt] at (-8.5+13.75,-5-1.75) {\small 1};
        
        \node (1') [circle,draw=black,fill=red!10,inner sep=2pt] at (-6.5+13.75+1.75,-7) {\small 2};
        \node (2') [circle,draw=black,fill=blue!10,inner sep=2pt] at (-8.5+13.75+1.75,-5) {\small 1};
        \draw (1')--(2');
        
        \node (1'')[circle,draw=black,inner sep=2pt] at (-6+14,-1.5) {\small 2};
        \node (2'')[circle,draw=black,fill=blue!10,inner sep=2pt] at (-8.5+14,1) {\small 1};
        \draw (1'')--(2'');
        
        \node (a)[circle,draw=black,fill=blue!10,inner sep=2pt] at (-6+14,-1.5+7) {\small 2};
        \node (b)[circle,draw=black,fill=blue!10,inner sep=2pt] at (-8.5+14,1+7) {\small 1};
        \draw (a)--(b);
        
        \node (a')[circle,draw=black,fill=blue!10,inner sep=2pt] at (-6+7,-1.5+7) {\small 2};
        \node (b')[circle,draw=black,inner sep=2pt] at (-8.5+7,1+7) {\small 1};
        \draw (a')--(b');
        
        \node (c) [circle,draw=black,fill=blue!10,inner sep=2pt] at (-6.5+1.5,5+1.5) {\small 2};
        \node (d) [circle,draw=black,fill=red!10,inner sep=2pt] at (-8.5+1.5,7+1.5) {\small 1};
        \draw (c)--(d);
    \end{tikzpicture}
    \caption{Regions of $\mathcal{BT}_2$ represented by labeled colored threshold graphs}
\end{figure}

\end{document}